\newcommand{\pD}{\mathcal{D}}
\newcommand{\D}{\mathcal{D}}
\newcommand{\sigmav}{\sigma^{\vee}}
\newcommand{\face}[2]{\text{face}\left(#1,#2\right)}
\newtheorem{thm}{Theorem}[section]
\newtheorem{lem}[thm]{Lemma}
\newtheorem{prop}[thm]{Proposition}
\newtheorem{cor}[thm]{Corollary}
\theoremstyle{definition}
\newtheorem{defn}[thm]{Definition}
\newtheorem{ex}[thm]{Example}
\newtheorem{rmk}[thm]{Remark}
\newtheorem{notation}[thm]{Notation}
\newcommand{\C}{\mathbb{C}}
\newcommand{\Z}{\mathbb{Z}}
\newcommand{\Q}{\mathbb{Q}}
\newcommand{\RR}{\mathbb{R}}
\newcommand{\QQ}{\mathbb{Q}}
\newcommand{\ZZ}{\mathbb{Z}}
\newcommand{\NN}{\mathbb{N}}
\newcommand{\PP}{\mathbb{P}}
\newcommand{\elements}[1]{\{#1\}}
\DeclareMathOperator{\linpart}{lin}
\DeclareMathOperator{\minvert}{min_{\text{vert}}}
\DeclareMathOperator{\relint}{relint}
\DeclareMathOperator{\tail}{tail}
\DeclareMathOperator{\SF}{SF}
\DeclareMathOperator{\tcadiv}{T-CaDiv}
\DeclareMathOperator{\tdiv}{T-Div}
\DeclareMathOperator{\ptdiv}{T-Princ}
\DeclareMathOperator{\Hom}{Hom}
\DeclareMathOperator{\divisor}{div}
\DeclareMathOperator{\wdiv}{Div}
\DeclareMathOperator{\pic}{Pic}
\DeclareMathOperator{\spec}{Spec}
\DeclareMathOperator{\ord}{ord}
\DeclareMathOperator{\supp}{supp}
\DeclareMathOperator{\xrays}{x-rays}
\DeclareMathOperator{\Pol}{Pol}
\DeclareMathOperator{\loc}{Loc}
\DeclareMathOperator{\pdv}{X}
\DeclareMathOperator{\Spec}{\mathbf{Spec}}
\DeclareMathOperator{\vol}{vol}
\DeclareMathOperator{\Graph}{\Gamma}
\DeclareMathOperator{\tcl}{T-Cl}
\DeclareMathOperator{\cl}{Cl}
\DeclareMathOperator{\dimension}{dim}
\DeclareMathOperator{\CaSF}{CaSF}
\DeclareMathOperator{\tpic}{T-Pic}
\DeclareMathOperator{\coeff}{coeff}
\renewcommand{\O}{{\mathcal{O}}}
\newcommand{\TV}{\mathbb{TV}}
\newcommand{\CO}{{\mathcal{O}}}
\newcommand{\fan}{\mathcal{S}}
\newcommand{\F}{\mathbb{F}}
\newcommand{\cotangfannullv}{%
 \psset{unit=0.6cm}
 \begin{pspicture}(-3.2,-3.2)(3.2,3.2)%
 \psset{linewidth=1pt}%
 \psline{-}(0,3)(0,1)(2,3)%
 \psline{-}(2,3)(0,1)(0,0)(3,0)%
 \psline{-}(3,0)(0,0)(0,-3)%
 \psline{-}(0,-3)(0,0)(-2,-2)%
 \psline{-}(-3, -3)(0,0)(0,1)(-3,1)%
 \psline{-}(-3, 1)(0,1)(0,3)%
 \rput[bl]{0}(0.5,2.4){$\D^1$}%
 \rput[bl]{0}(-2,2){$\D^2$}%
 \rput[bl]{0}(-2,-0.5){$\D^3$}%
 \rput[bl]{0}(-1.2,-2.2){$\D^4$}%
 \rput[bl]{0}(1.5,-1.5){$\D^5$}%
 \rput[bl]{0}(1.5,1.3){$\D^6$}%
\end{pspicture}}
\newcommand{\cotangfaninftyv}{%
 \psset{unit=0.6cm}
 \begin{pspicture}(-3.2,-3.2)(3.2,3.2)%
  \psset{linewidth=1pt}%
  \psline{-}(0,3)(0,0)(3,3)%
  \psline{-}(2,2)(0,0)(0,0)(3,0)%
  \psline{-}(3,0)(0,0)(-1,-1)(-1,-3)%
  \psline{-}(-1,-3)(-1,-1)(-2,-2)%
  \psline{-}(-3, -3)(-1,-1)(-3,-1)%
  \psline{-}(-3, -1)(-1,-1)(0,0)(0,3)%
  \rput[bl]{0}(0.7,2){$\D^1$}%
  \rput[bl]{0}(-2,1){$\D^{2}$}%
  \rput[bl]{0}(-2.8,-2){$\D^{3}$}%
  \rput[bl]{0}(-2,-2.8){$\D^{4}$}%
  \rput[bl]{0}(1,-1.5){$\D^5$}%
  \rput[bl]{0}(2,0.7){$\D^6$}%
\end{pspicture}}
\newcommand{\cotangfanonev}{%
 \psset{unit=0.6cm}
 \begin{pspicture}(-3.2,-3.2)(3.2,3.2)%
 \psset{linewidth=1pt}%
 \psline{-}(0,3)(0,0)(1,0)(3,2)%
 \psline{-}(3,2)(1,0)(3,0)%
 \psline{-}(3,0)(1,0)(1,-3)%
 \psline{-}(1,-3)(1,0)(0,0)(-3,-3)%
 \psline{-}(-2, -2)(0,0)(-3,0)%
 \psline{-}(-3, 0)(0,0)(0,3)%
 \rput[bl]{0}(1,1.8){$\D^{1}$}%
 \rput[bl]{0}(-2,1.5){${\D^{2}}$}%
 \rput[bl]{0}(-2.4,-1.3){$\D^{3}$}%
 \rput[bl]{0}(-0.9,-2.2){$\D^{4}$}%
 \rput[bl]{0}(1.8,-1.8){$\D^{5}$}%
 \rput[bl]{0}(2.4,0.3){$\D^{6}$}%
\end{pspicture}}
\begin{document}

\title{Torus Invariant Divisors}

\author[L.~Petersen]{Lars Petersen}
\address{Institut f\"ur Mathematik und Informatik,
         Freie Universit\"at Berlin
	 Arnimallee 3,
	 14195 Berlin, Germany}
\email{petersen@math.fu-berlin.de}

\author[H.~S\"uss]{Hendrik S\"uss }
\address{Institut f\"ur Mathematik,
        LS Algebra und Geometrie,
        Brandenburgische Technische Universit\"at Cottbus,
        PF 10 13 44, 
        03013 Cottbus, Germany}
\email{suess@math.tu-cottbus.de}

\begin{abstract}
Using the language of \cite{MR2207875} and \cite{divfans}, we describe invariant divisors on normal varieties $X$ which admit an effective torus action of complexity one. In this picture, $X$ is given by a divisorial fan $\fan$ on a smooth projective curve $Y$. Cartier divisors on $X$  can be described by piecewise affine functions $h$ on the slices $\fan_P$ of $\fan$, whereas Weil divisors correspond to certain zero and one dimensional faces of it. Furthermore we provide descriptions of the divisor class group and the canonical divisor. Global sections of line bundles $\O(D_h)$ will be determined by a subset of a weight polytope associated to $h$, and global sections of specific line bundles on the underlying curve $Y$. \end{abstract}

\maketitle

\section{Introduction}

Although toric geometry covers only a rather restricted class of varieties, it nevertheless provides a large amount of toy models and fruitful examples. In order to extend its results and techniques to a broader class of objects we suggest to consider varieties admitting an effective action of a lower dimensional torus, so-called $T$-varieties. 

In particular, one can consider $T$-varieties of complexity one, i.e.\ normal varieties $X$ of dimension $n$ which admit an effective action of the $(n-1)$-dimensional torus $T^{n-1}$. In flavour, this setting is still very close to the toric one, and there have already been several approaches, e.g. in \cite[Chapter IV, \S 1]{toroidal} via toroidal embeddings, in \cite{timashev_class} via the language of hypercones and hyperfans. The easiest class of examples is given by $\C^*$-surfaces, which have been studied in great detail, cf. e.g. \cite{MR0460342}, and \cite{MR2020670} and references therein.

However, this article will provide an insight into $T$-invariant divisors on $X$, i.e.\ Cartier and Weil divisors using the rather new language of polyhedral divisors. For comparison, the reader may consult \cite[Chapter II, \S\S 1,2]{toroidal} and \cite{timashevCDcomplOne}.

In section $2$ we recall the language of $T$-varieties from  \cite{MR2207875}, and \cite{divfans}. As we will specialize to actions of complexity one we display the essential features of this case. The building blocks for the description of a complexity-one $T$-variety $X$ are twofold: a smooth projective curve $Y$ as an algebro-geometric datum, and an $(n-1)$-dimensional divisorial fan $\fan$ on $Y$ as a combinatorial datum.

Section $3$ deals with invariant divisors. First, we consider Cartier divisors. Like in toric geometry they will be related to piecewise affine linear functions $h$ on the divisorial fan $\fan$. Then we will provide the description of Weil divisors which will follow easily from the orbit structure of $X$ lying over $Y$. We also include a formula for the divisor class group, and a representation of the canonical divisor. Moreover, we obtain a description of the global sections of a line bundle $\O(D_h)$ via a weight polytope $\Box_h$ associated to $h$, and global sections of specific line bundles on $Y$ induced by elements of $\Box_h$.

Section $4$ completes this paper by comparing parts of our results with those of \cite{MR2020670} in the case of affine $\C^*$-surfaces.

\section{T-Varieties} 
\label{sec:t-varieties}

We follow the notation of \cite{divfans}. First, let us recall some facts and notations from convex geometry. Let $N$ denote a lattice and $M:=\Hom(N,\Z)$ its dual.
The associated $\Q$-vector spaces $N \otimes \Q$ and $M \otimes \Q$ are denoted by $N_\Q$, and $M_\Q$, respectively. Let $\sigma \subset N_\Q$ be a pointed convex polyhedral cone. Consider a polyhedron $\Delta$ which can be written as a Minkowski sum $\Delta = \pi + \sigma$ of $\sigma$, and a compact polyhedron $\pi$. Then $\Delta$ is said to have $\sigma$ as its {\em tailcone}. This decomposition of $\Delta$ is only unique up to $\pi$.

With respect to Minkowski addition the polyhedra with tailcone $\sigma$ form a semi-group which we denote by $\Pol_{\sigma}^+(N)$. Note that $\sigma \in \Pol_{\sigma}^+(N)$ is the neutral element of this semi-group and that $\emptyset$  by definition is also an element of $\Pol_{\sigma}^+(N)$.

\begin{defn}
A \emph{polyhedral divisor} with tailcone $\sigma$ on a normal variety $Y$ is a formal finite sum
\[\D = \sum_Z \Delta_Z \otimes Z\,,\]
where $Z$ runs over all prime divisors on $Y$ and $\Delta_Z \in \Pol^+_{\sigma}(N)$. Here, finite means that only finitely many coefficients differ from the tailcone.
\end{defn}

For every element $u \in \sigma^\vee \cap M$ we can consider the evaluation of $\D$ via
\[\D(u):=\sum_Z \min_{v \in \Delta_Z} \langle u , v \rangle Z\,.\]
This yields an ordinary divisor on $\loc \D$, where
\[\loc \D := Y \setminus \left( \bigcup_{\Delta_Z = \emptyset} Z \right)\]
denotes the {\em locus} of $\D$.

\begin{defn}
A polyhedral divisor $\D$ is called {\em proper} if 
\begin{enumerate}
\item it is {\em Cartier}, i.e.\ $\D(u)$ is Cartier for every $u \in \sigma^\vee \cap M$,
\item it is {\em semiample}, i.e.\  $\D(u)$ is semiample for every $u \in \sigma^\vee \cap M$,
\item $\D$ is {\em big} outside the boundary, i.e.\ $\D(u)$ is big for every $u$ in the relative interior of $\sigma^\vee$.
\end{enumerate}
\end{defn}

From now on, we will only say polyhedral divisor instead of proper polyhedral divisor except for those cases in which we we want to distinguish between them explicitly. One can associate an $M$-graded $k$-algebra with such a polyhedral divisor, and consequently an affine scheme admitting a $T^N$-action:
\[\pdv(\D):= \spec \bigoplus_{u \in \sigma^\vee \cap M} \Gamma(\loc \D,\CO_{\loc \D}(\D(u)))\,.\]

More specifically, this construction gives us an affine normal variety of dimension $\dim Y + \dim N$ together with a $T^N$-action. Moreover, every normal affine variety with torus action can be obtained this way \cite{MR2207875}.

\begin{defn}
Let $\D=\sum_Z \Delta_Z \otimes Z$, and $\D'=\sum_Z \Delta'_Z \otimes Z$ be two polyhedral divisors on $Y$.
\begin{enumerate}
\item We write $\D' \subset \D$ if $\Delta'_Z \subset \Delta_Z$ holds for every prime divisor $Z$.
\item Let $\sigma : = \tail \D$ denote the tailcone of $\D$. For an element $u \in \sigmav \cap M$ we define $\face{\sigma}{u}$ to be the set of all $v \in \sigma$ such that $\langle u,v \rangle$ is minimal.
\item We define the {\em intersection} of polyhedral divisors by 
\[\D \cap \D' := \sum_Z (\Delta'_Z \cap \Delta_Z) \otimes Z.\]
\item We define the {\em degree} of a polyhedral divisor $\D$ on a curve $Y$ as 
\[\deg \D := \sum_Z \Delta_Z.\]
{\em Note: If $\D$ carries $\emptyset$-coefficients we obtain that $\deg \D = \emptyset$.}
\item For a (not necessarily closed) point $y \in Y$ we define the {\em fiber polyhedron} 
\[\D_y := \sum_{y \in Z} \Delta_Z.\]
{\em Note: We can recover $\Delta_Z$ this way since $\Delta_Z=\D_Z$.}
\item For an open subset $U \subset Y$ we set 
\[\D|_U := \D + \sum_{Z \cap U = \emptyset} \emptyset \otimes Z\,. \]
\end{enumerate}
\end{defn}

Now assume that $\D' \subset \D$ holds and $\D,\D'$ are proper. This implies
\[\bigoplus_{u \in \sigma^\vee \cap M} \Gamma\big(\loc \D',\CO_{\loc \D'}(\D'(u))\big)
\supset \bigoplus_{u \in \sigma^\vee \cap M} \Gamma\big(\loc \D,\CO_{\loc \D}(\D(u))\big),\]
and we get a dominant morphism $\pdv(\D') \rightarrow \pdv(\D)$.

\begin{defn}
If $\D' \subset \D$ holds for two proper polyhedral divisors and the corresponding map defines an open inclusion, then we say that $\D'$ is a {\em face} of $\D$, and we denote this by $\D' \prec \D$.
\end{defn}

\begin{defn}\ 
\begin{enumerate}
\item   A {\em divisorial fan} is a finite set $\fan$ of proper polyhedral divisors such that for $\D,\D' \in \fan$ we have $\D \succ \D' \cap \D \prec \D'$.
\item The polyhedral complex $\fan_y$ defined by the polyhedra $\D_y$ is called a {\em slice} of the divisorial fan $\fan$.
\item $\fan$ is called {\em complete} if all slices $\fan_y$ are complete subdivisions of $N_\QQ$.
\end{enumerate}
\end{defn}

The upper face relations guarantee that we can glue the affine varieties $\pdv(\D)$ via
\[\pdv(\D) \leftarrow \pdv(\D \cap \D') \rightarrow \pdv(\D').\]
By \cite[5.4.]{divfans} we know that the cocycle condition is fulfilled, so we obtain a variety which we denote by $\pdv(\fan)$. In the case of a complete this variety is also complete.

A divisorial fan $\fan$ corresponds to an open affine covering of $\pdv(\fan)$ given by $(X(\D))_{\D \in \fan}$. Observe that it is not unique, because we may switch to another invariant open affine covering of the same variety. We will do this occasionally by refining an existing divisorial fan.

Let us consider the affine case $X = \spec A$, where $A = \bigoplus_{u} \Gamma(\loc \D,\CO(\D(u)))$. 
We have $A_0 = \Gamma(\loc \D,\CO_{\loc \D})$, and thus get the following two proper and surjective maps to $Y_0 := \spec A_0$, the categorical quotient of $X$: 
\[q: X \rightarrow Y_0,\qquad \pi: \loc \D \rightarrow Y_0\,.\]

\begin{lem}\label{sec:lem-refine}
Let $\D$ be a polyhedral divisor on $Y$ and $\{U_i\}_{i \in I}$ an open affine covering of $Y_0$. 
Then $q^{-1}(U_i) \cong \pdv(\D|_{\pi^{-1}(U_i)})$. Moreover, we get a divisorial fan $\fan := \{\D|_{\pi^{-1}(U_i)}\}_{i \in I}$ such that $\pdv(\fan) \cong \pdv(\D)$.
\end{lem}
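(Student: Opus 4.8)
The plan is to prove Lemma~\ref{sec:lem-refine} in two stages, corresponding to its two assertions. First I would establish the local statement $q^{-1}(U_i) \cong \pdv(\D|_{\pi^{-1}(U_i)})$, and then I would assemble the collection $\{\D|_{\pi^{-1}(U_i)}\}$ into a divisorial fan whose associated variety recovers $\pdv(\D)$.

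For the first stage, I would work directly on the level of the $M$-graded algebras. By definition $\pdv(\D|_{\pi^{-1}(U_i)}) = \spec \bigoplus_u \Gamma(\loc(\D|_{\pi^{-1}(U_i)}), \CO(\D|_{\pi^{-1}(U_i)}(u)))$. The key observation is that passing from $\D$ to $\D|_{\pi^{-1}(U_i)}$ replaces the coefficient $\Delta_Z$ by $\emptyset$ exactly for those prime divisors $Z$ meeting $\loc \D$ but not $\pi^{-1}(U_i)$, so that $\loc(\D|_{\pi^{-1}(U_i)}) = \pi^{-1}(U_i)$. Consequently the graded piece in degree $u$ becomes $\Gamma(\pi^{-1}(U_i), \CO(\D(u)))$. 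On the other side, since $q$ is affine and $Y_0 = \spec A_0$ with $U_i$ affine open, $q^{-1}(U_i) = \spec(A \otimes_{A_0} A_i)$ where $A_i = \Gamma(U_i, \CO_{Y_0})$; I would then compute the degree-$u$ part of this localized algebra and identify it with $\Gamma(\pi^{-1}(U_i), \CO(\D(u)))$ via flat base change along the proper map $\pi$. Matching the two descriptions degree by degree gives the desired isomorphism, compatibly with the $T^N$-action since everything is done $M$-homogeneously.

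For the second stage, I would verify that $\fan := \{\D|_{\pi^{-1}(U_i)}\}_{i \in I}$ satisfies the defining face condition of a divisorial fan, namely $\D|_{\pi^{-1}(U_i)} \succ \D|_{\pi^{-1}(U_i)} \cap \D|_{\pi^{-1}(U_j)} \prec \D|_{\pi^{-1}(U_j)}$. Here the intersection is $\D|_{\pi^{-1}(U_i \cap U_j)}$, since the coefficientwise intersection amounts to setting a coefficient to $\emptyset$ whenever either summand has an $\emptyset$-coefficient, which corresponds to restricting the locus to $\pi^{-1}(U_i) \cap \pi^{-1}(U_j) = \pi^{-1}(U_i \cap U_j)$. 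The required open inclusions then follow from the first stage applied to the affine open $U_i \cap U_j \subset U_i$: the induced map $\pdv(\D|_{\pi^{-1}(U_i \cap U_j)}) \to \pdv(\D|_{\pi^{-1}(U_i)})$ is an isomorphism onto $q^{-1}(U_i \cap U_j) \subset q^{-1}(U_i)$, which is open. Finally, gluing the $q^{-1}(U_i)$ along these open inclusions reproduces $X = \pdv(\D)$ because the $U_i$ cover $Y_0$ and $q$ is surjective, so $\pdv(\fan) \cong \pdv(\D)$.

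I expect the main obstacle to be the careful identification in the first stage of the localized algebra $A \otimes_{A_0} A_i$ with the algebra built from $\D|_{\pi^{-1}(U_i)}$, specifically showing that localization on $Y_0$ commutes with taking global sections of $\CO(\D(u))$ along the proper morphism $\pi$. This requires that $\Gamma(\pi^{-1}(U_i), \CO(\D(u))) = \Gamma(U_i, \pi_* \CO(\D(u))) = A_i \otimes_{A_0} \Gamma(\loc\D, \CO(\D(u)))$, i.e.\ a base-change/flatness statement; the remaining combinatorial checks (that restriction sets the correct coefficients to $\emptyset$ and that intersections correspond to intersected loci) are routine bookkeeping once the locus identification $\loc(\D|_{\pi^{-1}(U_i)}) = \pi^{-1}(U_i)$ is in place.
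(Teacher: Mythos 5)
Your strategy is the right one and, in outline, it reconstructs what the paper itself does not spell out: the paper's entire proof of Lemma~\ref{sec:lem-refine} is the one-line citation of [3.3] in \cite{divfans}, so your two-stage argument (identify $q^{-1}(U_i)$ with $\pdv(\D|_{\pi^{-1}(U_i)})$ degree by degree, then check the face relations and glue) is exactly the expected unpacking of that reference. But there are two genuine gaps, and both sit in the steps you dismiss as routine bookkeeping. First, you never check that the restrictions $\D|_{\pi^{-1}(U_i)}$ are \emph{proper} polyhedral divisors, which is part of the assertion: a divisorial fan is by definition a finite set of proper polyhedral divisors. Cartier-ness and semiampleness restrict painlessly (generating global sections restrict), but bigness of $\D(u)|_{\pi^{-1}(U_i)}$ for $u \in \relint\sigma^\vee$ is not formal: a section whose non-vanishing locus is affine in $\loc \D$ need not remain so after intersecting with $\pi^{-1}(U_i)$. (In the complexity-one setting, where the paper actually applies the lemma, this is immediate from Remark~\ref{sec:rmk-cod1}: as soon as some coefficient becomes $\emptyset$ one has $\deg \D|_{\pi^{-1}(U_i)} = \emptyset$ and the properness criterion is vacuous.) Second, your key locus identity $\loc(\D|_{\pi^{-1}(U_i)}) = \pi^{-1}(U_i)$ is asserted, not proved: with the paper's definition of restriction, only prime divisors \emph{disjoint} from $\pi^{-1}(U_i)$ acquire $\emptyset$-coefficients. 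The complement $Y_0 \setminus U_i$ is pure of codimension one (it is the complement of an affine open in the affine $Y_0$), but its preimage $\pi^{-1}(Y_0 \setminus U_i)$ can have components of codimension $\geq 2$ in $\loc \D$ (think of a small contraction) which are not removed. The repair is cheap --- the sheaves $\CO(\D(u))$ are reflexive on a normal variety, so the graded pieces are insensitive to a codimension-two discrepancy --- but it must be said; for $Y$ a curve the issue is vacuous.

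Conversely, the step you single out as the main obstacle is the easy one. No properness of $\pi$ and no base-change theorem are needed: $\pi_*\CO(\D(u))$ is quasi-coherent on the affine scheme $Y_0$ (pushforward along a quasi-compact separated morphism), so $\Gamma(U_i, \pi_*\CO(\D(u))) = A_i \otimes_{A_0} \Gamma(Y_0, \pi_*\CO(\D(u)))$ holds for any affine open $U_i$ by the elementary theory of quasi-coherent sheaves on affine schemes, and $q^{-1}(U_i) = \spec(A \otimes_{A_0} A_i)$ simply because $q$ is an affine morphism. The remaining points of your second stage are correct but deserve the arguments you omit: $U_i \cap U_j$ is affine because $Y_0$ is separated; the identity $\D|_{\pi^{-1}(U_i)} \cap \D|_{\pi^{-1}(U_j)} = \D|_{\pi^{-1}(U_i \cap U_j)}$ holds because a prime divisor $Z$ meeting both $\pi^{-1}(U_i)$ and $\pi^{-1}(U_j)$ has irreducible image meeting both $U_i$ and $U_j$, and two nonempty open subsets of an irreducible set intersect --- this needs saying, since for arbitrary opens ``meets $U$ and meets $U'$'' does not imply ``meets $U \cap U'$''; and no cocycle condition needs checking since all charts are identified with the open sets $q^{-1}(U_i)$ inside the fixed $X$. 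Finally, a divisorial fan is a \emph{finite} set, so you should pass to a finite subcover of $\{U_i\}_{i \in I}$, which is harmless as $Y_0$ is quasi-compact.
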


\begin{proof}
This is a direct consequence of \cite[3.3]{divfans}.
\end{proof}

\begin{rmk}
\label{sec:rmk-cod1}
We pay special attention to complexity-one torus actions. This means that the underlying variety $Y$ of the corresponding divisorial fan is a projective curve.

In this case, the locus of a polyhedral divisor is either affine or complete, and we get simple criteria for properness and the face relations:

\begin{itemize}
\item   $\D$ is a proper polyhedral divisor if $\deg \D$ is strictly contained in $\tail \D$ and for every $u \in \sigma^\vee$ with
\[\face{\tail \D}{u} \cap \deg \D \neq \emptyset\]
some multiple of $\D(u)$ is principal.
\item Given two polyhedral divisors $\D' \subset \D$ with $\D$ being proper, then $\D'$ is proper and a face of $\D$ if and only if $\Delta'_P$ is a face of $\Delta_P$ for every point $P \in Y$ and we have $\deg \D \cap \tail \D' = \deg \D'$.
\end{itemize}
\end{rmk}

Observe that $\pdv(\fan)$ is not determined by the prime divisor slices $\fan_D$ of $\fan$ in general. This can already be seen in the case of toric surfaces with restricted torus action (cf. Example \ref{sec:ex-hirzebruch}). Considering the Hirzebruch surface $\F_1$, we could blow up the point corresponding to the cone $\sigma_2$, thus inserting the ray $\RR_{\geq 0}(-1,0)$, and nevertheless obtain the same slices. So merely looking at the subdivisions, i.e.\ the slices $S_D$, does not give us all the necessary information. We really need to know which polyhedra in different slices belong to the same polyhedral divisor.

For a divisorial fan on a curve which consists only of polyhedral divisors with affine locus the situation is different. If we consider two such fans $\fan,\,\fan'$ having the same slices, Lemma \ref{sec:lem-refine} tells us that there exists a common refinement
\[\fan''=\{\D|_U \mid \D \in \fan, U \in \mathcal{U}\} = \{\D|_U \mid \D \in \fan', U \in \mathcal{U}\}\]
with $\mathcal{U}$ being a sufficiently fine affine covering  of $Y$. We then have
\[\pdv(\fan) \cong \pdv(\fan'') \cong \pdv(\fan').\]
For $Y$ a complete curve we may also have polyhedral divisors with locus $Y$. For reconstructing $\pdv(\fan)$ from the slices, we need to know which polyhedra belong to divisors with complete loci. 

In the forthcoming examples we will therefore label the maximal polyhedra in a subdivision by the polyhedral divisor they belong to. The locus of any polyhedral divisor $\D \in \fan$ can then be read off immediately.

\begin{rmk}
\label{sec:rem-toric}
\cite[sec. 5]{divfans} We get a very illuminating class of examples from toric geometry by restricting the torus action.

Let us consider a complete $n$-dimensional toric variety $X:=X(\Sigma)$. We restrict its torus action to that of a smaller torus $T \hookrightarrow T_X$ and construct a divisorial fan $\fan$ with $X(\fan)=X(\Sigma)$ in the following way. The embedding $T \hookrightarrow T_X$ corresponds to an exact sequence of lattices
\[0 \rightarrow N \stackrel{F}{\rightarrow} N_X \stackrel{P'}{\rightarrow} N' \rightarrow 0\,.\]
We may choose a splitting $N_X \cong N \oplus N'$ with projections
\[ P: N_X\rightarrow N,\quad P':N_X\rightarrow N'.\]
Define $Y:=X(\Sigma')$, where $\Sigma'$ is an arbitrary smooth projective fan $\Sigma'$ refining the images $P'(\delta)$ of all faces $\delta \in \Sigma$. Then every cone $\sigma \in \Sigma(n)$ gives rise to a polyhedral divisor $\D_{\sigma}$. For each ray $\rho' \in \Sigma'(1)$, let $n_{\rho'}$ denote its primitive generator. We then set
\[\Delta_{\rho'}(\sigma) = P(P'^{-1}(n_{\rho'})\cap \sigma)\,, \textnormal{ and} \;\, \D_\sigma = \sum_{\rho' \in \Sigma'(1)} \Delta_{\rho'}(\sigma) \otimes D_{\rho'}.\]
Finally $\{ \D_\sigma \}_{\sigma \in \Sigma(n)}$ is a divisorial fan. Observe that for certain polyhedral divisors $\D_\sigma$ and rays $\rho' \in \Sigma'(1)$ the intersection $P'^{-1}(n_{\rho'}) \cap \sigma$ may be empty. In this case we have that $\Delta_{\rho'}(\sigma) = \emptyset$. 
\end{rmk}

\begin{ex}
\label{sec:ex-hirzebruch}
We consider the $a$'th-Hirzebruch surface $\F_a$ as a $\C^*$-surface via the following maps of lattices
\[F = \left(\begin{smallmatrix} 1 \\ 0 \end{smallmatrix} \right)\,, \;\; P' = \left( \begin{smallmatrix} 0 & 1 \end{smallmatrix}\right)\,, \;\; P = \left(\begin{smallmatrix} 1 & 0 \end{smallmatrix}\right)\,.\] 

\begin{figure}[htbp]
\psset{unit=0.95cm}
\begin{pspicture}(0,-6)(12,0)
\psframe[linecolor=white](0.5,-4.5)(3.5,-1.5)

\psline{->}(2,-3)(4,-3)
\psline{->}(2,-3)(0,-1)
\psline{<->}(2,-5)(2,-1)
\psline[linewidth=0.5pt, linestyle=dotted]{-}(0,-1.8)(4,-1.8)
\psline[linewidth=0.5pt, linestyle=dotted]{-}(0,-4.2)(4,-4.2)

\qdisk(0.5,-1.5){1pt}
\rput[bl]{0}(2.7,-2.15){$\sigma_0$}
\rput[bl]{0}(1.4,-2.15){$\sigma_1$}
\rput[bl]{0}(0.8,-3.3){$\sigma_2$}
\rput[bl]{0}(2.7,-4){$\sigma_3$}
\rput[tr]{0}(1.4,-1.2){\tiny{$(-1,a)$}}
\rput[bl]{0}(1.8,-5.5){$\F_a$}

\psline{<-|}(6,-3)(8,-3)
\psline{|->}(8,-3)(10,-3)
\rput[bl]{0}(10.7,-3){\textnormal{tailfan}}

\psline{<-|}(6,-1.8)(7.25,-1.8)
\psline{-|}(7.25,-1.8)(8,-1.8)
\psline{->}(8,-1.8)(10,-1.8)
\uput*[270](7.1,-1.8){${\tiny -\frac{1}{a}}$}
\rput[bl]{0}(11,-1.8){$\fan_0$}
\rput[bl]{0}(9,-1.6){$\D_{\sigma_0}$}
\rput[bl]{0}(6.5,-1.6){$\D_{\sigma_2}$}
\rput[bl]{0}(7.3,-1.6){$\D_{\sigma_1}$}

\psline{<-|}(6,-4.2)(8,-4.2)
\psline{|->}(8,-4.2)(10,-4.2)
\rput[bl]{0}(11,-4.2){$\fan_{\infty}$}
\rput[bl]{0}(9,-4){$\D_{\sigma_3}$}
\rput[bl]{0}(6.5,-4){$\D_{\sigma_2}$}
\rput[bl]{0}(8,-5.5){$\fan$}

\psline{|->}(5,-3)(5,-1)
\psline{|->}(5,-3)(5,-5)
\rput[bl]{0}(4.5,-5.5){$Y=\PP^1$}

\end{pspicture}
\caption{Divisorial fan associated to $\F_a$.}
\end{figure}
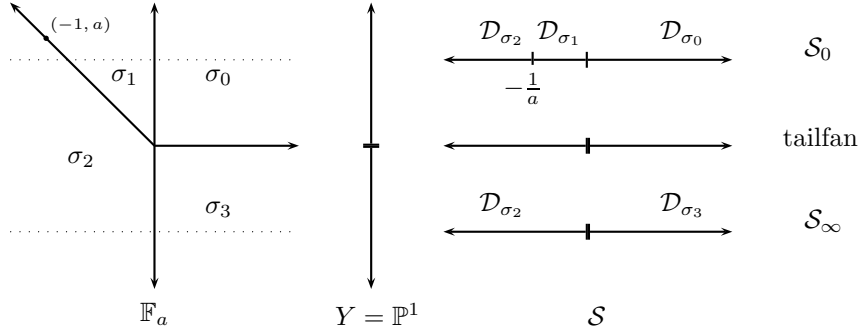

\end{ex}

\section{Invariant divisors}
\label{sec:invariant-divisors}

As we saw in the previous section the case of a torus action of complexity one can be handled quite comfortably. 
In particular, this is true for the concept of divisors on $T$-varieties. Therefore, we will restrict from now on to this case unless stated otherwise.

\begin{prop}
\label{sec:prop-complete-picard}
Let $\D$ be a polyhedral divisor with complete locus. Then $X:=\pdv(\D)$ has a trivial invariant Picard group $\tpic(X)$.
\end{prop}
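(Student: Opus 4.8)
The plan is to exploit that a complete locus forces the affine variety $X=\pdv(\D)$ to possess a \emph{single} closed torus orbit, and then to run the standard argument that a $T$-linearized line bundle on such a variety is equivariantly trivial. First I would record that, since $Y$ is a projective curve and $\loc\D=Y$, the degree-zero part of the coordinate ring satisfies
\[
A_0=\Gamma(\loc\D,\O_{\loc\D})=\Gamma(Y,\O_Y)=k .
\]
Hence the categorical quotient $q\colon X\to Y_0=\spec A_0$ introduced above has $Y_0$ equal to a single point. For a good quotient of an affine variety by a torus the closed orbits are in bijection with the points of $Y_0$, so $X$ contains exactly one closed $T$-orbit, which I will call $O$.

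Next, given a $T$-linearized line bundle $L$ on $X$ representing a class in $\tpic(X)$, I would trivialize it along $O$. The orbit satisfies $O\cong T/T_x$ for an isotropy subtorus $T_x\hookrightarrow T$, and a $T$-linearized line bundle on $O$ is determined by a character of $T_x$; since $T_x\hookrightarrow T$ is an inclusion of tori, this character extends to a character of $T$. Twisting $L$ by the inverse of that character, I may assume that $L|_O$ is equivariantly trivial, generated by an invariant nowhere-vanishing section $s_0$.

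The crux is then to promote $s_0$ to a global trivialization. Because $X$ is affine and $O\subset X$ is a closed subscheme, the restriction $\Gamma(X,L)\to\Gamma(O,L|_O)$ is surjective; taking the weight-zero (that is, $T$-invariant) component of a lift produces an invariant global section $s$ with $s|_O=s_0\neq 0$, using that the restriction map is graded. The vanishing locus $Z(s)$ is a $T$-invariant closed subset, and here I would invoke the standard fact that a nonempty invariant closed subset of an affine $T$-variety must contain a closed orbit. Since $O$ is the unique closed orbit and $s$ is nonzero on $O$, we obtain $Z(s)=\emptyset$, so $s$ trivializes $L$ and the original class in $\tpic(X)$ was merely the twist by a character, hence trivial.

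I expect the main obstacle to be the bookkeeping in the middle step rather than a genuine difficulty: one must check that the character of $T_x$ really extends and that the invariant lift $s$ is nonvanishing on \emph{all} of $O$, which is exactly why the ``every nonempty invariant closed set contains a closed orbit'' lemma is the correct tool. Combinatorially this dovetails with the properness criterion of Remark~\ref{sec:rmk-cod1}: the clause demanding that some multiple of $\D(u)$ be principal whenever $\face{\tail\D}{u}\cap\deg\D\neq\emptyset$ is precisely what makes the invariant support function attached to $L$ come from a semi-invariant rational function, so once the dictionary between invariant Cartier divisors and support functions is set up one could give a purely combinatorial rendering of the same conclusion.
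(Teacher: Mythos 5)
Your argument is correct, but it takes a genuinely different route from the paper's. Both proofs hinge on the same initial observation: completeness of $\loc \D$ forces $A_u = \Gamma(Y,\O_Y)=k$ in the appropriate degrees, in particular $A_0=k$. From there the paper stays entirely inside the graded algebra: choosing $v \in \relint(\tail \D)$, it notes that $A_u=k$ whenever $\langle u,v\rangle =0$, so every nonzero homogeneous element outside $A_{>0}=\bigoplus_{\langle u,v\rangle>0}A_u$ is a unit, hence every homogeneous ideal lies in $A_{>0}$ and the ``vertex'' $V(A_{>0})$ lies in the closure of every orbit; consequently the only invariant open subset meeting $V(A_{>0})$ is $X$ itself, every invariant affine covering must contain $X$ as a member, and every invariant Cartier divisor is therefore globally principal. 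You instead pass through geometric invariant theory: from $Y_0=\spec A_0 = \spec k$ you get a unique closed orbit $O$, trivialize the linearized bundle along $O$ after a character twist, lift the invariant trivializing section through the graded surjection $\Gamma(X,L)\to \Gamma(O,L|_O)$, and invoke the lemma that a nonempty closed invariant subset of an affine $T$-variety contains a closed orbit to conclude the lifted section is nowhere vanishing. Two small points to tighten: the isotropy group of a point in a torus orbit is a closed diagonalizable subgroup that may be disconnected, not necessarily a subtorus, but your extension step survives since every character of a closed diagonalizable subgroup of $T$ extends to $T$; and you should say explicitly that ``trivial up to a character twist'' is the same as trivial in $\tpic(X)$, because the resulting nowhere-vanishing section is semi-invariant of some weight $u$, so the divisor equals $\divisor(f\chi^u)$, which is principal in the paper's sense (this is exactly the class of relations divided out in Corollary \ref{sec:tpic}). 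As for what each approach buys: the paper's proof is shorter and self-contained, and its covering-theoretic conclusion (any invariant affine cover of $X$ contains $X$ itself) is precisely what gets reused in the correspondence between $T$-Cartier divisors and support functions; yours is more conceptual, isolates the role of the unique closed orbit, and applies verbatim to any affine $T$-variety whose good quotient is a point, independently of the polyhedral-divisor description.
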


\begin{proof}
Recall that the affine coordinate ring of $\pdv(\D)$ is
\[A:=\bigoplus_{u \in \sigma^\vee \cap M} \Gamma(Y,\CO(\D(u))).\]
We choose an element $v \in \relint(\tail \D)$  
and consider the homogeneous prime ideal
\[A_{>0} := \bigoplus_{\langle u , v \rangle > 0} A_u \,.\]
As the locus of $\D$ is complete we have $A_u = \Gamma(Y, \CO_Y) = k$ for $\langle u , v \rangle = 0$. This implies that every nonzero homogeneous element in $A \setminus A_{>0}$ is a unit. Therefore, every homogeneous ideal is a subset of $A_{>0}$, i.e.\ $V:=V(A_{>0})$ lies in the closure of every $T$-orbit on $X$. So the only invariant affine open subset that hits $V$ is $X$ itself. Hence, every invariant covering of $X$ contains $X$ itself implying that every invariant Cartier-divisor has to be principal.
\end{proof}

\subsection{Cartier divisors}
\label{sec:construction-cartier-div}
Our aim is to give a description of an invariant Cartier divisor $D$ on $X=X(\fan)$ in terms of a piecewise linear function on $\TV(\Sigma)$ and a divisor on the curve $Y$, where $\Sigma$ is the tailfan of $\fan$. The crucial input is the notion of a {\em divisorial support function}.

Let $\Sigma \subset N_\Q$ be a not necessarily complete polyhedral subdivision of $N_\Q$ consisting of tailed polyhedra. 

\begin{defn}
\label{def:support-function}
A continuous function $h: |\Sigma| \to \Q$ which is affine on every polyhedron $\Delta \in \Sigma$ is called a {\em $\Q$-support function}, or merely a {\em support function} if it has integer slope and integer translation, i.e.\ for $v \in |\Sigma|$ and $k \in \NN$ such that $kv$ is a lattice point we have $k h(v) \in \ZZ$.
The group of support functions on $\Sigma$ is denoted by $\SF(\Sigma)$.
\end{defn}

\begin{defn}
\label{def:linear-part}
Let $h$ be as above and $\Delta \in \Sigma$ a polyhedron with tailcone $\delta$. We define a linear function $h^\Delta_t$ on $\delta$ by setting $h^\Delta_t(v):= h(p+v)-h(p)$ for some $p \in \Delta$. As $h^\Delta_t$ is induced by $h$ we call it the {\em linear part} of $h|_\Delta$, or $\linpart h|_\Delta$ for short.
\end{defn}

Using Definitions \ref{def:support-function} and \ref{def:linear-part}, we can obviously associate a unique continuous piecewise linear function with an element $h \in \SF(\Sigma)$, say $h_t$. That is how we come to the crucial definition of this section. 

Let $\fan$ be a divisorial fan on a curve $Y$. For every $P \in Y$ we thus get a polyhedral subdivision $\fan_P$ consisting of polyhedral coefficients.

\begin{defn}
\label{def:fansy-support-function}
We define $\SF(\fan)$ to be the group of all collections
\[(h_P)_{P \in Y}\in \prod_{P \in Y}\SF(\fan_P) \quad \textnormal{such that}\]
\begin{enumerate}
\item all $h_P$ have the same linear part $h_t$, i.e.\ for polytopes $\Delta \in \fan_{P}$ and $\Delta' \in \fan_{P'}$ with the same tailcone $\delta$ we have that $\linpart h_P|_\Delta = \linpart h_{P'}|_{\Delta'} = h_t|_\delta$.
\item only for finitely many $P \in Y$ $h_P$ differs from $h_t$.
\end{enumerate}
We call $\SF(\fan)$ the group of {\em divisorial support functions} on $\fan$.
\end{defn}

\begin{notation}
We may restrict an element $h \in \SF(\fan)$ to a subfan or even to a polyhedral divisor $\D \in \fan$. The restriction will be denoted by $h|_\D$.
\end{notation}

\begin{defn}
A divisorial support function $h \in \SF(\fan)$ is called {\em principal} if $h(v) = \langle u, v \rangle + D$ with $u \in M$ and $D$ is a principal divisor on $Y$. Here, $D$ is to be considered as an element in $\SF(\fan)$ taking the constant value $\coeff_P(D)$ on every slice $\fan_P$.
\end{defn}

\begin{rmk}
Let us denote the function field of $Y$ by $K(Y)$. We then consider the graded ring $\bigoplus_{u \in M} K(Y)$, its multiplication being induced
by the one in $K(Y)$. Hence, we have a canonical inclusion of graded rings 
\[A:=\bigoplus_{u \in \sigma^\vee \cap M} \Gamma\big(\loc \D,\CO_{\loc \D}(\D(u))\big) \hookrightarrow \bigoplus_{u \in M} K(Y).\]
Moreover, the ring $\bigoplus_{u \in M} K(Y)$ is equal to the subring
\[K(X)^{\text{hom}} \subset K(X)=\text{Quot}(A)\]
which is generated by the semi-invariant functions on $X$, since $X$ is birationally equivalent to $T_M \times Y$ as a consequence of \cite[Thm.~3.1]{MR2207875}.

Thus, denoting the unit of $K(Y)$ in degree $u$ by $\chi^u \in K(X)^{\text{hom}}$ we obtain a unique representation $f \cdot \chi^u$ with $f \in K(Y)$ and $u \in M$ for every semi-invariant function on $X$.
\end{rmk}


\begin{defn}
\label{def:casf}
A divisorial support function $h$ is called {\em Cartier} if for every $\D \in \fan$ with complete locus its restriction $h|_\D$ is principal. The corresponding group is denoted by $\CaSF(\fan)$.
\end{defn}

\begin{notation}
\label{not:tcadiv}
The group of $T$-invariant Cartier divisors on $X(\fan)$ is denoted by $\tcadiv(\fan)$.
\end{notation}

\begin{prop}
$\tcadiv(\fan)$ and $\CaSF(\fan)$ are isomorphic as abelian groups.
\end{prop}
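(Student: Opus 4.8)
The plan is to construct mutually inverse group homomorphisms between $\CaSF(\fan)$ and $\tcadiv(\fan)$. Since a divisorial fan $\fan$ corresponds to an open affine covering $(X(\D))_{\D \in \fan}$ of $X = X(\fan)$, and an invariant Cartier divisor is specified by local data — a semi-invariant rational function on each affine chart that agree up to units on overlaps — the natural strategy is to show that the combinatorial datum $h \in \CaSF(\fan)$ encodes exactly such a compatible system of local equations. The remark preceding the statement is the essential bridge: every semi-invariant rational function on $X$ has a unique representation $f \cdot \chi^u$ with $f \in K(Y)$ and $u \in M$, so a $T$-invariant Cartier divisor restricted to each chart $X(\D)$ is given by a finite sum of such terms, i.e.\ by a piecewise affine datum on the slices.

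First I would make the construction local, fixing a single proper polyhedral divisor $\D \in \fan$ with tailcone $\sigma$. On the affine chart $X(\D) = \pdv(\D)$ I would argue that a $T$-invariant Cartier divisor corresponds precisely to a principal divisor in the appropriate homogeneous sense, invoking Proposition \ref{sec:prop-complete-picard} when $\loc \D$ is complete (so that $\tpic$ is trivial and every invariant Cartier divisor on that chart is principal, hence given by some $f \cdot \chi^u$). Translating $f \cdot \chi^u$ into support-function language: the order of vanishing of such a function along the invariant prime divisors over a point $P$ reproduces an affine function on each polyhedron $\Delta \in \fan_P$ whose linear part is determined by $u$ (independent of $P$) and whose translation is governed by $\ord_P f$. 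This shows that $h|_\D$ being \emph{principal} in the sense of Definition~\ref{def:casf} is exactly the condition that the local datum comes from a genuine semi-invariant function, which is what Cartier-ness on $X(\D)$ demands.

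Next I would globalize. Given $h \in \CaSF(\fan)$, on each chart with complete locus the restriction $h|_\D$ is principal and thus yields a local equation; on charts with affine locus one checks directly (using the affine correspondence of \cite{MR2207875}) that any divisorial support function gives a Cartier divisor, since there is no obstruction. The compatibility of these local data on overlaps $X(\D \cap \D')$ follows because the two support functions $h|_\D$ and $h|_{\D'}$ agree as elements of $\SF(\fan_P)$ on the common refinement — they are restrictions of the single global object $h$ — so their local equations differ by a unit, exactly the gluing condition for a Cartier divisor. This defines a map $\CaSF(\fan) \to \tcadiv(\fan)$, and it is visibly additive since the correspondence $h \mapsto f \cdot \chi^u$ is additive in $h$.

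For the inverse, given an invariant Cartier divisor $D$, I would read off on each chart the semi-invariant local equation $f_\D \cdot \chi^{u_\D}$ and define $h_P$ on each slice by recording the induced orders of vanishing; the linear-part condition \emph{(1)} of Definition~\ref{def:fansy-support-function} holds because the degree-$u$ piece is globally determined, and the finiteness condition \emph{(2)} holds because $D$ is supported on finitely many prime divisors. The two maps are inverse to one another because both directions are governed by the same unique representation $f \cdot \chi^u$ from the remark. The main obstacle I expect is the careful verification that the local principality on complete-locus charts is both \emph{necessary} and \emph{sufficient} for Cartier-ness, and that the gluing on overlaps is consistent — in particular that restricting a principal $h|_\D$ to a face $\D \cap \D'$ again lands in the principal (equivalently, unit-difference) regime. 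This hinges on Proposition~\ref{sec:prop-complete-picard} to handle the complete-locus case and on the face-relation criterion of Remark~\ref{sec:rmk-cod1} to ensure the restrictions behave compatibly; the affine-locus case is comparatively routine.
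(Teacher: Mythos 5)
Your architecture is the paper's own: split the charts of $(X(\D))_{\D \in \fan}$ by the type of $\loc \D$, use Proposition~\ref{sec:prop-complete-picard} to force principality on complete-locus charts, and use the unique representation $f\cdot\chi^u$ of semi-invariant functions to translate between local equations and affine data on the slices, in both directions. So the skeleton is correct and matches the published proof.

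The genuine gap is your treatment of the affine-locus charts, and it appears in both directions. In the forward direction you claim that on such charts ``any divisorial support function gives a Cartier divisor, since there is no obstruction,'' and in the inverse direction you propose to ``read off on each chart \emph{the} semi-invariant local equation $f_\D\chi^{u_\D}$.'' Both statements presuppose that an invariant Cartier divisor is principal on each affine-locus chart, which is false in general: for $\D$ trivial over a smooth affine curve $Y$ with $\pic Y \neq 0$ one has $X(\D) \cong Y \times T$, and non-principal divisor classes on $Y$ pull back to invariant Cartier divisors admitting no single equation on the chart. This is precisely where the paper does real work: after possibly refining $\fan$ (Remark~\ref{sec:rmk-cod1}, Lemma~\ref{sec:lem-refine}) so that $h_P|_\D = u(\D) + a_P(\D)$ holds with one \emph{fixed} weight $u(\D)$ --- when $\tail\D$ is not full-dimensional the affine representations of $h_P|_{\D_P}$ need not share a weight across different $P$ --- one covers $Y$ by open sets $Y_i$ on which $\sum_P a_P(\D)P$ is principal, say equal to $-\divisor(f^i_\D)$, and takes $f^i_\D\chi^{u(\D)}$ as local equation on $\pdv(\D|_{Y_i})$; this uses both that divisors on a smooth curve are locally principal and that, by Lemma~\ref{sec:lem-refine}, the sets $\pdv(\D|_{Y_i})$ form an invariant affine cover of the chart. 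Symmetrically, in the inverse direction the paper intersects the given trivializing cover of $D$ with the charts to get data $(U_i, f_i\chi^{u_i})_i$ on each affine-locus chart and must verify that the weight $-u_i$ and the coefficients $-\coeff_P(\divisor f_i)$ are independent of $i$ before they assemble into an element of $\CaSF(\fan)$ --- a well-definedness check your sketch leaves implicit. None of this derails your plan, but the affine-locus case is exactly where the content of the proposition lies, not where it is ``comparatively routine.''
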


\begin{proof}
Take an element $h=(h_P)_P \in \CaSF(\fan)$. Consider an element $\D \in \fan$ with a complete locus. Then we know by Proposition \ref{sec:prop-complete-picard} that every $T$-invariant Cartier divisor has to be principal on $\pdv(\D)$ and it is straightforward to see that principal support functions encode principal divisors. Now, for every $\D \in \fan$ with an affine locus there exist a weight $u(\D) \in M$ and a constant $a_P(\D) \in \ZZ$ such that $h_P|_\D = u(\D) + a_P(\D)$. For achieving this, we possibly have to pass to a refinement of $\fan$ as stated in Remark \ref{sec:rmk-cod1}. We may then cover $Y$ by open subsets $Y_i$ such that $\sum a_P(\D) P|_{Y_i}$ is principal on every $Y_i$. Let us assume that $\sum a_P(\D) P|_{Y_i} = - \divisor(f_\D^i)$ on $Y_i$. Then $f_\D^i \chi^{u(\D)} \in K(X)^{\textnormal{hom}}$, so it defines a principal divisor on $\pdv(\D|_{Y_i})$. It is then not hard to see that all of these principal divisors patch together to a Cartier divisor on  $\pdv(\fan)$.

On the other hand consider an element $D \in \tcadiv(\fan)$ given by an invariant open affine covering and local homogeneous generators. Intersecting with the open covering coming from $(X(\D))_{\D \in \fan}$ yields an open invariant cover for every affine $X(\D)$ which itself is induced by a cover of $\loc \D$. Let us denote this induced covering of $X(\D)$ by $(U_i)_{i \in I}$ with local generators of $D$ denoted by $f_i \in K(X)^*$. Depending on whether $\loc \D$ is affine or complete, we have
\[D|_{X(\D)} = (U_i,f_i\chi^{u_i})_i \quad \textnormal{or} \quad  D|_{X(\D)} = (X(\D),f\chi^u)\,.\]
Recall that for an element $\D \in \fan$ with complete locus there is no choice since we only have the trivial covering. To construct $h_t$ we choose for every maximal cone $\sigma \in \tail \fan$ an element $\D$ having exactly this tailcone and assign the weight $-u_i$ to it. Observe that this choice is independent of $i$. Doing this for all maximal cones $\sigma$ finally gives us $h_t$. Now, if $P \in Y$ is in $\supp \,(\divisor f_i)$ for some $f_i$ we set $a_P(\D) := -\coeff_P(\divisor f_i)$. This again is independent of the choice of $i$ and all these affine functions these functions fit together to an element $h \in \CaSF(\fan)$.
\end{proof}

\begin{notation}
The $T$-invariant Cartier divisor induced by an element $h \in \CaSF(\fan)$ is denoted by $D_h$. We will often switch from one notation to the other and consider both expressions to mean the same thing.
\end{notation}

As an immediate consequence we have this

\begin{cor}
\label{sec:tpic}
The $T$-invariant Picard group of $X(\fan)$ is given by
\[\tpic X \cong \frac{\CaSF(\fan)}{\langle \langle u,\cdot\rangle + D \, | \, u \in M, D \sim 0\rangle}\,.\]
\end{cor}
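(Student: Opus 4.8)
The plan is to deduce this corollary directly from the preceding proposition, which already establishes the isomorphism $\tcadiv(\fan) \cong \CaSF(\fan)$ of abelian groups. Since the Picard group is by definition the group of Cartier divisors modulo the principal ones, the only work remaining is to identify which support functions correspond to \emph{invariant principal} divisors on $X(\fan)$ under this isomorphism. Concretely, I would assert that $\tpic X = \tcadiv(\fan)/\ptdiv(\fan)$, where $\ptdiv(\fan)$ denotes the subgroup of invariant principal divisors, and then translate $\ptdiv(\fan)$ through the isomorphism of the proposition.

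First I would recall the description of semi-invariant functions given in the Remark preceding the proposition: every semi-invariant function on $X$ has a unique representation $f\cdot\chi^u$ with $f \in K(Y)$ and $u \in M$. Its associated principal divisor is therefore encoded on each slice $\fan_P$ by the affine function $v \mapsto \langle u, v\rangle + \coeff_P(\divisor f)$, which is precisely the support function $\langle u,\cdot\rangle + \divisor(f)$ in the notation of the definition of principal support functions. Thus the image of $\ptdiv(\fan)$ under the isomorphism is exactly the subgroup of $\CaSF(\fan)$ generated by functions of the form $\langle u,\cdot\rangle + D$ where $u \in M$ and $D = \divisor(f)$ is a principal divisor on $Y$, i.e.\ $D \sim 0$. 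This is the subgroup appearing in the denominator of the claimed formula.

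The key steps, in order, are: (i) write $\tpic X$ as the quotient of $\tcadiv(\fan)$ by $\ptdiv(\fan)$; (ii) transport this along the group isomorphism of the proposition to get a quotient of $\CaSF(\fan)$; (iii) verify that a support function is principal in the earlier-defined sense if and only if the corresponding Cartier divisor $D_h$ is principal, which is what the Remark makes transparent via the $f\chi^u$ representation; and (iv) identify the denominator subgroup as $\langle\, \langle u,\cdot\rangle + D \mid u \in M,\, D \sim 0\,\rangle$. Since a quotient map respects subgroups, the isomorphism descends to the quotients and yields the stated formula.

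The main subtlety is the bookkeeping in step (iii): one must check that the correspondence between principal support functions and principal invariant Cartier divisors is a genuine group isomorphism onto $\ptdiv(\fan)$, rather than merely a surjection. This requires observing that the representation $f\chi^u$ is \emph{unique}, so that distinct principal support functions give distinct principal divisors and vice versa; the uniqueness is precisely what the Remark supplies. A secondary point worth a sentence is that for polyhedral divisors with complete locus the Cartier condition already forces $h|_\D$ to be principal, so no obstruction arises from the complete-locus components; this is consistent with Proposition~\ref{sec:prop-complete-picard}, which shows such pieces contribute nothing new to the invariant Picard group. With these observations the corollary follows immediately as a formal consequence of the proposition.
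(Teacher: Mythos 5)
Your proposal is correct and follows exactly the route the paper intends: the paper states this corollary as an ``immediate consequence'' of the proposition $\tcadiv(\fan)\cong\CaSF(\fan)$, with the identification of invariant principal divisors $\divisor(f\chi^u)$ and principal support functions $\langle u,\cdot\rangle+\divisor(f)$ left implicit, which is precisely what you spell out in steps (i)--(iv). Your extra care in step (iii) --- using the uniqueness of the representation $f\chi^u$ from the Remark to see that the correspondence restricts to an isomorphism onto the subgroup of principal objects --- is a sound filling-in of the detail the paper omits, not a deviation from its argument.
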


\subsection{Weil divisors}
We split this section into two parts as since we will give some results concerning torus actions of arbitrary complexity. We will deal with this case in the first part. The second part can be regarded as a specialization of the first one but we also provide further results not yet obtained in the general case.

\subsubsection*{Torus Actions of Arbitrary Codimension}

We would like to describe $T$-invariant prime divisors. As $X(\fan)$ is patched together by affine charts $X(\D)$ we can restrict to the affine case. Set $n:=\dimension(T) = \dimension(X(\fan)) -k$, where $k$ is the dimension of the base variety $Y$. We can assume the latter to be smooth and projective. 

In general there are two types of $T$-invariant prime divisors:

\begin{enumerate}
\item families of $n$-dimensional orbit closures over prime divisors in $X(\fan)$.\label{item:dimT}
\item families of $n-1$-dimensional orbit closures over $X(\fan)$.\label{item:dimT-1}
\end{enumerate}

\begin{prop}
\label{sec:prop-prime-divs}
Let $\D$ be a polyhedral divisor with tailcone $\sigma$ on an arbitrary normal variety $Y$, then there are one-to-one correspondences 
\begin{enumerate}
\item between prime divisor of {\em type~\ref{item:dimT}} and vertices $v \in \Delta_Z$ with $Z$ being a prime divisor on $Y$, such that $\CO(\D(u))|_Z$ is big, for $u \in ((\Delta_Z - v)^\vee)^\circ$.
\item between prime divisors of {\em type~\ref{item:dimT-1}} and rays $\rho$ of $\tail \D$ with $\D(u)$ big for $u \in (\rho^\perp \cap \sigma^\vee)^\circ$.
\end{enumerate}
\end{prop}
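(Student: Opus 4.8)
The plan is to pass to the affine situation and classify invariant prime divisors through their associated valuations. Since $\pdv(\fan)$ is covered by the charts $\pdv(\D)$ and a prime divisor is detected on any chart meeting it, I may assume $X=\pdv(\D)=\spec A$ with $A=\bigoplus_{u\in\sigmav\cap M}\Gamma(\loc\D,\CO(\D(u)))$. A $T$-invariant prime divisor $E$ is the same datum as the homogeneous height-one prime $\mathfrak p_E=\{a\in A:\ord_E(a)>0\}$, equivalently as a $T$-invariant divisorial valuation $\nu$ on $K(X)$ that is non-negative on $A$ and whose center $V(\mathfrak p_\nu)$ has codimension one. Because $\nu$ is $T$-invariant it is determined by its values on the semi-invariants $f\chi^u$ (with $f\in K(Y)$, $u\in M$), and its restriction to the invariant subfield $K(X)^T=K(Y)$ is either trivial or of the form $\ell\cdot\ord_Z$ for a prime divisor $Z\subset Y$ and some $\ell>0$. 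This dichotomy is exactly the split into families of horizontal (type~\ref{item:dimT-1}) and vertical (type~\ref{item:dimT}) orbit closures.

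In the horizontal case $\nu|_{K(Y)}=0$, so $\nu$ is the toric-type valuation $\nu_n(f\chi^u)=\langle u,n\rangle$ determined by some $n\in N_\Q$; non-negativity on $A$ forces $\langle u,n\rangle\ge 0$ for all $u\in\sigmav$, i.e.\ $n\in\tail\D$. In the vertical case $\nu|_{K(Y)}=\mu\cdot\ord_Z$ and $\nu(f\chi^u)=\mu\,(\ord_Z(f)+\langle u,v\rangle)$ for some $v\in N_\Q$ and $\mu\in\NN$ the least positive integer with $\mu v\in N$; the inequality $\ord_Z(f)\ge-\coeff_Z(\D(u))=-\min_{w\in\Delta_Z}\langle u,w\rangle$ defining $A_u$ then shows that non-negativity holds exactly when $v\in\Delta_Z$. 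Thus the candidate valuations are parametrised by $n\in\tail\D$, respectively by pairs $(Z,v)$ with $v\in\Delta_Z$. The normal-cone condition singles out the extremal ones: choosing $u$ in $(\rho^\perp\cap\sigmav)^\circ$ makes $\face{\tail\D}{u}=\rho$, and choosing $u$ in $((\Delta_Z-v)^\vee)^\circ$ makes $v$ the unique vertex of $\Delta_Z$ minimising $\langle u,\cdot\rangle$; these are precisely the positions for which the torus part of the center drops to codimension one, so that $n$ must lie on a ray $\rho$ and $v$ must be a vertex.

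It remains to decide, for such a ray or vertex, whether the center $V(\mathfrak p_\nu)$ really has codimension one in $X$; this is where the bigness hypotheses enter and is the technical heart of the argument. For the ray $\rho$ one computes $A/\mathfrak p_{\nu_\rho}=\bigoplus_{u\in\rho^\perp\cap\sigmav}A_u$, a ring graded by the rank-$(\rank N-1)$ lattice $M\cap\rho^\perp$; its Krull dimension equals $(\rank N-1)$ plus the dimension of $\proj\bigoplus_{m\ge0}\Gamma(\loc\D,\CO(m\D(u)))$ for $u\in(\rho^\perp\cap\sigmav)^\circ$, and the latter equals $\dim\loc\D$ precisely when $\D(u)$ is big. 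Hence $\dim V(\mathfrak p_{\nu_\rho})=\dim X-1$ if and only if $\D(u)$ is big for such $u$, giving correspondence~\textit{(2)}. The analogous computation over $Z$, restricting all sheaves to $Z$ and using that $\CO(\D(u))|_Z$ has Iitaka dimension $\dim Z=\dim Y-1$ exactly when it is big, yields correspondence~\textit{(1)}.

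The step I expect to be the main obstacle is this last dimension count: identifying the Krull dimension of the graded quotient ring with the Iitaka (Kodaira) dimension of $\D(u)$, respectively of its restriction to $Z$, and checking that these centers are non-empty, so that bigness is not merely sufficient but equivalent to the center being a genuine prime divisor rather than a subvariety of higher codimension.
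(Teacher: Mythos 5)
Your proposal takes a genuinely different route from the paper. You work directly on $X=\spec A$ and classify invariant prime divisors through their valuations, i.e.\ through homogeneous height-one primes, pushing all of the geometry into a Krull-dimension computation for the graded quotients $A/\mathfrak{p}$. The paper never argues on $X$ itself: it introduces $\widetilde{X}:=\Spec_{\loc \D}\bigoplus_u\CO(\D(u))$ with the good quotient $\pi:\widetilde{X}\to\loc\D$ and the birational morphism $r:\widetilde{X}\to X$; the fiberwise orbit description of \cite[7.11]{MR2207875} yields the list of invariant prime divisors of $\widetilde{X}$ (vertices $v\in\Delta_Z$ and rays of $\tail\D$, with no bigness condition upstairs), and the contraction formula $\dim Z-\dim r(Z)=\dim\pi(Z)-\dim\vartheta_u(\pi(Z))$ of \cite[10.1]{MR2207875} translates ``not contracted by $r$'' into exactly the two bigness hypotheses. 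So the dimension count that you yourself single out as the main obstacle is, in the paper, delegated wholesale to one known formula; your approach buys independence from the auxiliary space $\widetilde{X}$, at the price of having to reprove that formula algebraically.

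Two of your steps are genuine gaps rather than omitted routine. First, the dichotomy ``$\nu|_{K(Y)}$ is trivial or equals $\ell\cdot\ord_Z$ for a prime divisor $Z\subset Y$'' does not follow formally, even under your standing assumption that the center has codimension one: there do exist invariant divisorial valuations, non-negative on $A$, whose restriction to $K(Y)$ is a divisorial valuation with center of codimension at least two on $Y$. For instance, take $\D$ the trivial divisor on $Y=\mathbb{A}^2$, so $X=\mathbb{A}^2\times\TV(\sigma)$, and let $\nu(f\chi^u)=\ord_E(f)$ with $E$ the exceptional divisor of the blow-up of the origin; its center on $X$ is $\{0\}\times\TV(\sigma)$. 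To exclude such valuations you must show that a codimension-one center on $X$ forces the center of $\nu|_{K(Y)}$ on $Y$ to have codimension at most one, and this already requires a bound on the fiber dimension of the quotient map --- essentially the content of \cite[7.11]{MR2207875} that the paper invokes, or the very Krull-dimension computation you postpone. Second, the identity $\dim A/\mathfrak{p}=\dim(\text{face of the weight cone})+\kappa$, with $\kappa$ the Iitaka dimension of $\D(u)$ resp.\ of $\CO(\D(u))|_Z$, is only asserted; for semiample $\D(u)$ it is true and provable (the quotient is the multigraded section ring of the induced data on the image of $\vartheta_u$, resp.\ on $Z$), but as it stands it is equivalent in strength to the paper's citation of \cite[10.1]{MR2207875} and must actually be carried out, including non-emptiness of the centers. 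The smaller verifications are fine: $\mathfrak{p}$ is prime because $\rho^\perp\cap\sigmav$ is a face of $\sigmav$, and a valuation centered at a height-one prime of the normal ring $A$ is equivalent to $\ord_E$ because a discrete valuation ring is maximal with respect to domination.
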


\begin{proof}
Consider $\widetilde{X}:=\Spec_{\loc \D} \bigoplus_u \CO(\D(u))$. We have $\pdv(\D) = \spec \Gamma(\widetilde{X}, \CO_{\widetilde{X}})$ and get equivariant maps
\[\xymatrix{\loc \D & {\widetilde{X}} \ar[l]_{\quad \; \pi} \ar[r]^{r} & X}.\]
From \cite[3.1]{MR2207875} we know that $\pi$ is a good quotient map, and $r$ is a birational morphism. In \cite[7.11]{MR2207875} the orbit structure of the fibers of $\pi$ is described. Thus, we know that $l$-dimensional faces $F$ of $\D_y$ correspond to $T$-invariant closed subvarieties of codimension $l$ in $\pi_y:=\pi^{-1}(y)$. While stated only for closed points one checks that the proof in fact works equally well for generic points $\xi$.

Furthermore we have to consider those subvarieties that get contracted by $r$. By \cite[10.1]{MR2207875} we have that
\[\dim Z - \dim r(Z) = \dim \pi(Z) - \dim \vartheta_u(\pi(Z))\]
for any invariant subvariety $Z \subset \widetilde{X}$. 

Hence, the bigness condition is equivalent to the fact that the image under $r$ of the corresponding prime divisor in $\widetilde{X}$ is again of codimension $1$.
\end{proof}

\begin{prop}
\label{sec:prop-principal}
We consider a polyhedral divisor on an arbitrary normal variety $Y$. Let $f \cdot \chi^u \in K(X)^{\textnormal{hom}}$. Then the corresponding principal divisor is given by 
\[\sum_{\rho} \langle u,n_\rho \rangle D_\rho  + \sum_{(Z,v)} \mu(v) (\langle u, v\rangle + \ord_Z f) D_{(Z,v)},\]
where $\mu(v)$ is the smallest integer $k \geq 1$ such that $k \cdot v$ is a lattice point, this lattice point is a multiple of the primitive lattice vector: $\mu(v)v=\varepsilon(v) n_v$.
\end{prop}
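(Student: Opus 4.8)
The plan is to compute the principal divisor
$\divisor(f\cdot\chi^u)$ on $X=\pdv(\D)$ by testing its order of vanishing
along each $T$-invariant prime divisor, using the classification of such
divisors furnished by Proposition \ref{sec:prop-prime-divs}. Since the
divisors of type \ref{item:dimT} are indexed by pairs $(Z,v)$ (with $Z$ a
prime divisor on $Y$ and $v$ a vertex of $\Delta_Z$) and those of type
\ref{item:dimT-1} by rays $\rho$ of $\tail\D$, the claimed formula will follow
once I verify that the multiplicity of $f\cdot\chi^u$ along $D_\rho$ is
$\langle u,n_\rho\rangle$ and along $D_{(Z,v)}$ is
$\mu(v)(\langle u,v\rangle+\ord_Z f)$.

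First I would work on the intermediate space
$\widetilde{X}=\Spec_{\loc\D}\bigoplus_u\CO(\D(u))$ introduced in the proof of
Proposition \ref{sec:prop-prime-divs}, where the prime divisors are easier to
describe: type \ref{item:dimT-1} divisors pull back to the toric-type divisors
coming from the rays of the tailfan, while type \ref{item:dimT} divisors sit
over prime divisors $Z$ of $Y$ via the good quotient $\pi$. The key local
computation is to pin down a defining equation, hence a uniformizer, for each
invariant prime divisor, and to read off how many times $f\cdot\chi^u$ vanishes
along it. For the ray $\rho$, the semi-invariant $\chi^u$ behaves exactly as in
the toric dictionary, giving the pairing $\langle u,n_\rho\rangle$; since $f$ is
pulled back from $Y$ and the fibers of $\pi$ are $n$-dimensional over the
generic point, $f$ contributes nothing along these horizontal divisors, which
is why no $\ord_Z f$ term appears in the first sum.

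The more delicate step is the vertical divisors $D_{(Z,v)}$. Here the lattice
index $\mu(v)$ enters because the vertex $v$ need not be a lattice point: the
primitive structure is captured by $\mu(v)v=\varepsilon(v)n_v$, and the local
uniformizer along $D_{(Z,v)}$ is obtained from a uniformizer $t_Z$ of $Z$
together with the character in the direction of $v$, scaled by $\mu(v)$. The
valuation of $\chi^u$ then picks up a factor $\mu(v)\langle u,v\rangle$, while
the valuation of $f$ picks up $\mu(v)\ord_Z f$, combining into
$\mu(v)(\langle u,v\rangle+\ord_Z f)$. I expect the main obstacle to be making
this normalization precise: one must transfer the computation through the
birational morphism $r\colon\widetilde{X}\to X$ and confirm, using that $r$ does
not contract $D_{(Z,v)}$ (the bigness condition of Proposition
\ref{sec:prop-prime-divs}), that the multiplicities on $\widetilde{X}$ agree
with those on $X$, and to correctly identify $\varepsilon(v)$ versus $\mu(v)$ so
that the final coefficient is exactly $\mu(v)(\langle u,v\rangle+\ord_Z f)$.

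Finally, since every invariant prime divisor is of one of these two types and
$f\cdot\chi^u$ is a homogeneous semi-invariant, these local computations
determine $\divisor(f\cdot\chi^u)$ completely, yielding the stated formula.
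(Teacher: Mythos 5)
Your overall strategy agrees with the paper's: the statement is local, so one computes the order of $f\chi^u$ along each invariant prime divisor as classified in Proposition \ref{sec:prop-prime-divs}. Your reasoning for the rays is sound (the toric computation gives $\langle u,n_\rho\rangle$, and $\ord_Z f$ is absent from the first sum because $D_\rho$ dominates $Y$, so $f$ is a unit at its generic point), and computing on $\widetilde{X}$ and transferring along $r$ is legitimate, since $r$ is birational and orders along non-contracted divisors may be computed upstairs.

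However, the decisive step --- which you yourself flag as ``the main obstacle'' --- is exactly the content of the paper's proof, and it is missing. Your description of the uniformizer along $D_{(Z,v)}$ as ``$t_Z$ together with the character in the direction of $v$, scaled by $\mu(v)$'' is not correct as stated and cannot by itself produce the factor $\mu(v)$. The paper first reduces to a model case: it passes to a face $\D' \prec \D$ whose tailcone is $0$ and whose only nontrivial coefficient is the single vertex $v$, over a factorial affine $Y$ (resp.\ tailcone $\rho$ with trivial coefficients for the ray case); your route via $\widetilde{X}$ handles the contraction issue through bigness but not this fiberwise localization isolating $v$, which is what makes the coordinate ring explicit. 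In that model, with a basis $e_1=n_v$ of $N$ and $a,b \in \ZZ$ chosen by B\'ezout so that $a\mu(v)+b\varepsilon(v)=1$ (using that $\mu(v)$ and $\varepsilon(v)$ are coprime), the element $y=t_Z^a\chi^{be^*_1}$ is shown to be irreducible in
\[\Gamma(\CO_X)=\Gamma(\CO_Y)\bigl[y,\;t_Z^{\pm\varepsilon(v)}\chi^{\mp\mu(v)e^*_1},\;\chi^{\pm e^*_2},\ldots,\chi^{\pm e^*_m}\bigr]\]
and to cut out $D_{(Z,v)}$; the $y$-order of $t_Z^\alpha\chi^u$ is then read off from an explicit factorization as $\varepsilon(v)\lambda_1+\mu(v)\alpha=\mu(v)(\langle u,v\rangle+\alpha)$, using that $t_Z^{-\varepsilon(v)}\chi^{\mu(v)e^*_1}$ is a unit. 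Without this construction (or an equivalent valuation-theoretic normalization), the coefficient $\mu(v)(\langle u,v\rangle+\ord_Z f)$ in your proposal is asserted rather than proved, so the argument as written has a genuine gap at its core.
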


\begin{proof}
This is a local statement, so we will pass to a sufficiently small invariant open affine set which meets a particular prime divisor. If we translate this into our combinatorial language and consider a prime divisor corresponding to $(Z,v)$ or $\rho$ then we have to choose a polyhedral divisor $\D' \prec \D \in \fan$ such that $v$ is also a vertex of $\D'_Z$ or $\rho$ is a ray in $\tail \D'$, respectively.

So we restrict to following two affine cases:
\begin{enumerate}
\item  $\D$ is a polyhedral divisor with tailcone $\sigma=0$ and a single point $\Delta_Z = v \in N_\QQ$ as the only nontrivial coefficient. Moreover, $Y$ is affine and factorial. In particular, $Z$ is a prime divisor with (local) parameter $t_Z$.
\item $\D$ is the trivial polyhedral divisor with one dimensional tailcone $\rho$ over an affine locus $Y$.
\end{enumerate}  
 
In the first case we may choose a $\ZZ$-Basis $e_1,\ldots, e_m$ of $N$ with 
$e_1 = n_v$ and consider the dual basis $e^*_1,\ldots, e^*_m$,  
By definition $\varepsilon(v)$ and $\mu(v)$ are coprime, so we will find $a,b \in \ZZ$ such that
$a \mu(v) + b \varepsilon(v) = 1$. In this situation $y:=t_Z^a\chi^{be^*_1}$ is irreducible in
\[\Gamma(\CO_X) = \Gamma(\CO_Y)[y, t_Z^{\pm \varepsilon(v)}\chi^{\mp \mu (v) e^*_1},\chi^{\pm e^*_2}, \ldots ,\chi^{\pm e^*_m}],\] 
and defines the prime divisor $D_{(Z,v)}$.
We consider an element $t_Z^\alpha \chi^{u}$ with $u=\sum_i \lambda_i e^*_i$. 
The $y$-order of $t_Z^\alpha \chi^{u}$ is $\varepsilon(v)\lambda_1 + \mu(v)\alpha = \mu(v)(\langle u, v \rangle+\alpha)$, since
\[t_Z^\alpha \chi^{u} = y^{\varepsilon(v)\lambda_1 + \mu(v)\alpha} (t_Z^{- \varepsilon(v)}\chi^{\mu (v) e^*_1})^{\lambda_1 a + b \alpha}\chi^{\lambda_2 e^*_2}\cdots\chi^{\lambda_m e^*_m},\] 
and $t_Z^{- \varepsilon(v)}\chi^{\mu (v) e^*_1}$ is a unit.

In the second case we choose a $\ZZ$-basis  $e_1,\ldots, e_m$ of $N$ with $e_1 = n_\rho$. Once again we consider the dual basis $e^*_1,\ldots, e^*_m$. In this situation
\[\Gamma(\CO_X) = \Gamma(\CO_Y)[\chi^{e^*_1},\chi^{\pm e^*_2}, \ldots ,\chi^{\pm e^*_m}].\]
Now $(\chi^{e^*_1})$  defines the prime divisor $\rho$ on $X$. For a principal divisor $f \cdot \chi^u$, the $\chi^{e^*_1}$-order equals the $e^*_1$-component of $u$, i.e.\ $\langle u, n_\rho \rangle$.
\end{proof}

\noindent
Our next goal is to describe the divisor class group of $X(\fan)$. Denote by $\tdiv(X(\fan))$ the $T$-invariant divisors, and by $\ptdiv(X(\fan))$ the $T$-invariant principal divisors on $X(\fan)$. Then we have that 
\[\cl\big(X(\fan)\big) \cong \tcl(X(\fan)) := \frac{\tdiv(X(\fan))}{\ptdiv(X(\fan))}\,.\]

\begin{cor}
\label{cor:divclass}
The divisor class group of $X(\fan)$ is given by
\[\cl\big(X(\fan)\big) = \frac {\bigoplus_\rho \ZZ \cdot D_\rho \oplus \bigoplus_{D_{(Z,v)}} \ZZ \cdot D_{(Z,v)}}{\langle \sum u(n_\rho) D_\rho + \sum_{D_{(Z,v)}} \mu(v) (\langle u, v\rangle + a_Z) D_{(Z,v)} \rangle}.\]
Here $u$ runs over all elements of $M$ and $\sum_Z a_Z Z$ over all principal divisors on $Y$. Thus, it is isomorphic to
\[\cl(Y) \oplus \bigoplus_\rho \ZZ D_\rho \oplus \bigoplus_{D_{(Z,v)}} \ZZ D_{(Z,v)}\]
modulo the relations

\begin{eqnarray*}
[Z] &=& \sum_{v\in \fan_Z} \mu(v)D_{(Z,v)}\,,\\
 0  &=& \sum_{\rho}  \langle u,\rho \rangle D_\rho  + \sum_{D_{(Z,v)}} \mu(v) \langle u,v \rangle D_{(Z,v)}\,.
\end{eqnarray*}
\end{cor}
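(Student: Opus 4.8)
The plan is to assemble the divisor class group from the two pieces supplied by the preceding propositions: the generators of $\tdiv(X(\fan))$ coming from Proposition~\ref{sec:prop-prime-divs}, and the principal relations coming from Proposition~\ref{sec:prop-principal}. First I would observe that, by Proposition~\ref{sec:prop-prime-divs}, every $T$-invariant prime divisor on $X(\fan)$ is of type~\ref{item:dimT} or type~\ref{item:dimT-1}, hence is one of the $D_{(Z,v)}$ (indexed by a prime divisor $Z$ on $Y$ together with a vertex $v$ of $\fan_Z$) or one of the $D_\rho$ (indexed by a ray $\rho$ of the tailfan). Therefore $\tdiv(X(\fan)) = \bigoplus_\rho \ZZ\cdot D_\rho \oplus \bigoplus_{D_{(Z,v)}}\ZZ\cdot D_{(Z,v)}$ as a free abelian group, and it remains only to identify the subgroup $\ptdiv(X(\fan))$ of principal divisors.

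Next I would invoke the remark establishing that every semi-invariant rational function on $X$ has a unique representation $f\cdot\chi^u$ with $f\in K(Y)$ and $u\in M$, so that the $T$-invariant principal divisors are exactly the divisors of such $f\cdot\chi^u$. Feeding $f\cdot\chi^u$ into the formula of Proposition~\ref{sec:prop-principal} gives
\[
\divisor(f\cdot\chi^u) = \sum_\rho \langle u,n_\rho\rangle D_\rho + \sum_{(Z,v)} \mu(v)\bigl(\langle u,v\rangle + \ord_Z f\bigr)D_{(Z,v)},
\]
and since $\divisor f = \sum_Z a_Z Z$ ranges over all principal divisors on $Y$ as $f$ ranges over $K(Y)^*$, writing $a_Z := \ord_Z f$ yields precisely the generating relation displayed in the first presentation of $\cl(X(\fan))$. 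This establishes the first displayed formula directly.

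For the second presentation I would split each relation into its $u$-part and its $f$-part. Taking $u=0$ and $f$ with $\divisor f = Z$ (possible up to the failure of principality, which is exactly what $\cl(Y)$ measures) contributes the relation $0 = \sum_{v\in\fan_Z}\mu(v)D_{(Z,v)}$ coming from a single prime $Z$; more precisely, the image of the class $[Z]\in\cl(Y)$ is identified with $\sum_{v\in\fan_Z}\mu(v)D_{(Z,v)}$, which is the first of the two stated relations. Taking $f=1$ (so all $a_Z=0$) leaves the purely toric relation $0 = \sum_\rho\langle u,n_\rho\rangle D_\rho + \sum_{(Z,v)}\mu(v)\langle u,v\rangle D_{(Z,v)}$, the second stated relation. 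The remaining task is to package the contributions of nontrivial principal divisors on $Y$ into a clean $\cl(Y)$-summand: I would argue that the map sending $Z\mapsto\sum_{v\in\fan_Z}\mu(v)D_{(Z,v)}$ descends to $\cl(Y)$ because principal $\sum a_Z Z$ on $Y$ already lie in the subgroup generated by the full relation, so that after quotienting by the toric relation the subgroup generated by all $\sum_Z a_Z(\sum_v \mu(v)D_{(Z,v)})$ with $\sum a_Z Z$ principal is precisely what is absorbed, leaving $\cl(Y)$ as a clean summand.

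I expect the main obstacle to be this last bookkeeping step, namely verifying that the two presentations genuinely agree and that the $\cl(Y)$ summand splits off as claimed. The subtlety is that the relation module is generated by elements depending simultaneously on $u\in M$ and on a principal divisor $\sum a_Z Z$ on $Y$, and one must show that changing basis in the relations (separating the $M$-linear toric part from the $Y$-part) correctly recovers $\cl(Y)$ rather than all of $\wdiv(Y)$. Concretely, the identity $[Z]=\sum_{v\in\fan_Z}\mu(v)D_{(Z,v)}$ must be read modulo the toric relations, and one has to check that a principal $\sum a_Z Z$ maps to zero under this assignment precisely via the $f$-dependent relation, so that only the class $[Z]\in\cl(Y)$ survives; carefully tracking completeness of the slices $\fan_Z$ (so that the vertices $v$ indexing $D_{(Z,v)}$ account for all of $Z$) is where I would spend most of the effort.
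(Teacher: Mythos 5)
Your proposal is correct and follows exactly the route the paper intends: the paper offers no separate proof, presenting the statement as an immediate corollary of the identification $\cl(X(\fan))\cong\tdiv(X(\fan))/\ptdiv(X(\fan))$, the list of invariant prime divisors from Proposition~\ref{sec:prop-prime-divs}, and the principal-divisor formula of Proposition~\ref{sec:prop-principal} applied to semi-invariant functions $f\cdot\chi^u$. Your final bookkeeping step is also sound as sketched: since the relation subgroup is generated by pairs $(u,\divisor f)$ ranging over all of $M\times\pdiv(Y)$ independently, it splits as the sum of the toric relations and the relations $S(\sum_Z a_Z Z)=\sum_Z a_Z\sum_v \mu(v)D_{(Z,v)}$ for principal divisors, and eliminating the $\wdiv(Y)$ generators via $Z\mapsto\sum_{v\in\fan_Z}\mu(v)D_{(Z,v)}$ shows the two presentations agree.
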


\begin{rmk}
We can also describe the ideals of prime divisors in terms of polyhedral divisors:
\begin{enumerate}
\item For prime divisors of type~\ref{item:dimT} corresponding to $(Z,v)$ the ideal is given by
\[I_{D_{(Z,v)}} = \bigoplus_{u \in \sigma^\vee \cap M} \Gamma(Y,\CO(\D(u))) \cap \{f \in K(Y) \mid \ord_Z(f) > - \langle u,v \rangle\}.\]
\item For prime divisors of type~\ref{item:dimT-1} the corresponding ideal is generated by all degrees $u$ which are not orthogonal to $\rho$:
\[I_{D_\rho} = \bigoplus_{u \in \sigma^\vee  \setminus \rho^\perp \cap M} \Gamma(Y,\CO(\D(u))).\]
\end{enumerate}
\end{rmk}

\subsubsection*{Torus Actions of Complexity One}
Stepping back to the complexity one case, Proposition \ref{sec:prop-prime-divs} is equivalent to 

\begin{cor}
  \label{sec:cor-prime-divs}
  Let $\D$ be a polyhedral divisor on a curve $Y$. Then there are one-to-one correspondences 
  \begin{enumerate}
  \item between prime divisors of {\em type~\ref{item:dimT}} and pairs $(P,v)$ with $P$ being point on $Y$ 
    and $v$ a vertex of $\Delta_P$,
  \item between prime divisors of {\em type~\ref{item:dimT-1}} and rays $\rho$ of $\sigma$ with $\deg \D \cap \rho = \emptyset$.
  \end{enumerate}
\end{cor}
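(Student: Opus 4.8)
The plan is to specialize Proposition~\ref{sec:prop-prime-divs} to the case where the base $Y$ is a curve, and to verify that the general bigness conditions appearing there simplify to the stated combinatorial conditions. Since $\dim Y = 1$, a prime divisor $Z \subset Y$ is simply a (closed) point $P \in Y$, so the general correspondence of type~\ref{item:dimT} with pairs $(Z,v)$ immediately becomes a correspondence with pairs $(P,v)$ where $P \in Y$ and $v$ is a vertex of $\Delta_P = \D_P$. The whole content of the corollary is therefore the claim that the bigness hypotheses in Proposition~\ref{sec:prop-prime-divs} are \emph{automatically satisfied} in part~(1) and reduce to $\deg\D \cap \rho = \emptyset$ in part~(2).

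First I would handle type~\ref{item:dimT}. Here the condition to be checked is that $\CO(\D(u))|_Z$ is big for $u \in ((\Delta_Z-v)^\vee)^\circ$. On a curve, a line bundle restricted to a point (or its restriction to $Z$, which is again the curve-theoretic datum) is big precisely when it has positive degree, and the relevant statement is that for a vertex $v$ and $u$ in the interior of the dual of the vertex cone, the divisor $\D(u) = \sum_P \min_{w\in\Delta_P}\langle u,w\rangle\, P$ automatically has the right degree behaviour along $P$ because $v$, being a vertex, is a strict minimizer of $\langle u,\cdot\rangle$ over $\Delta_P$ for such $u$. Thus every vertex of $\Delta_P$ genuinely gives rise to a prime divisor with no extra condition; I would spell this out by observing that the dimension-count formula $\dim Z - \dim r(Z) = \dim\pi(Z) - \dim\vartheta_u(\pi(Z))$ from \cite[10.1]{MR2207875}, invoked in the proof of Proposition~\ref{sec:prop-prime-divs}, collapses on a curve.

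For type~\ref{item:dimT-1}, I would translate the bigness condition ``$\D(u)$ big for $u\in(\rho^\perp\cap\sigma^\vee)^\circ$'' using the complexity-one properness criterion recalled in Remark~\ref{sec:rmk-cod1}: on a complete curve $\D(u)$ is big iff it has positive degree, and $\deg\D(u) = \min_{w\in\deg\D}\langle u,w\rangle$. For $u$ in the relative interior of $\rho^\perp\cap\sigma^\vee$, the face $\face{\sigma}{u}$ is exactly $\rho$, so $\face{\sigma}{u}\cap\deg\D = \emptyset$ is equivalent to $\rho\cap\deg\D=\emptyset$; under this condition $\min_{w\in\deg\D}\langle u,w\rangle>0$, giving bigness, whereas if $\rho\subset\deg\D$ the minimum is $0$ and bigness fails. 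Hence the ray $\rho$ contributes a prime divisor exactly when $\deg\D\cap\rho=\emptyset$.

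The main obstacle I expect is the careful matching of the general bigness conditions to the degree-positivity criterion, in particular keeping track of which cone $\face{\sigma}{u}$ is cut out by $u$ ranging over the open sets $((\Delta_Z-v)^\vee)^\circ$ and $(\rho^\perp\cap\sigma^\vee)^\circ$, and confirming that $\deg\D$ (which may be $\emptyset$ when $\D$ carries $\emptyset$-coefficients) interacts correctly with these conditions. Everything else is a direct substitution of $\dim Y = 1$ into the already-proved general statement.
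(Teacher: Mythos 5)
Your proposal is correct and takes essentially the same route as the paper, which gives no separate proof and presents the corollary as an immediate specialization of Proposition~\ref{sec:prop-prime-divs} to $\dim Y = 1$: your reduction of the type~\ref{item:dimT-1} condition via $\deg \D(u) = \min_{w \in \deg\D}\langle u, w\rangle$ and $\face{\sigma}{u} = \rho$ for $u \in (\rho^\perp \cap \sigma^\vee)^\circ$ is exactly the intended computation, with the $\emptyset$-convention making the criterion automatic for affine loci, where bigness of $\D(u)$ holds trivially. One small wobble worth fixing in the write-up: for type~\ref{item:dimT} the restriction $\CO(\D(u))|_Z$ lives on the point $Z = P$, so it is big for trivial reasons --- there is no meaningful ``positive degree'' condition on a point --- and this, equivalently your dimension-count observation that $\pi$ maps the relevant divisor to a point so $r$ cannot contract it, is the correct reason the condition is vacuous.
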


\begin{defn}
Let $\D \in \fan$ be a polyhedral divisor with tailcone $\sigma$. A ray $\rho \prec \sigma$ with $\deg \D \cap \rho = \emptyset$ is called an {\em extremal ray}. The set of extremal rays is denoted by $\xrays(\D)$ or $\xrays(\fan)$, respectively.
\end{defn}

The combination of Proposition \ref{sec:prop-principal} and the description of $T$-Cartier divisors yield  

\begin{cor}
\label{sec:cor-cartier2weil}
Let $h = \sum_P h_P$ be a Cartier divisor on $\D$. Then the corresponding Weil divisor is given by 
\[-\sum_{\rho} h_t (n_\rho) D_\rho - \sum_{(P,v)} \mu(v) h_P(v) D_{(P,v)}.\]
\end{cor}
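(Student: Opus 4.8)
The plan is to reduce the statement to a purely local computation on the affine charts $X(\D)$ and then invoke Proposition~\ref{sec:prop-principal}. Passing from a Cartier divisor to its underlying Weil divisor is a local operation, and $X(\fan)$ is covered by the open affine pieces $X(\D)$, $\D \in \fan$; hence it suffices to compute the coefficient of each prime divisor on a chart that meets it. By Corollary~\ref{sec:cor-prime-divs} such a prime divisor is either $D_\rho$ for an extremal ray $\rho$ of $\tail\D$, or $D_{(P,v)}$ for a vertex $v$ of the slice $\D_P$. Refining $\fan$ as in Lemma~\ref{sec:lem-refine} when the locus of $\D$ is affine (and using Definition~\ref{def:casf} directly when it is complete), I may assume that $h|_\D$ is principal, so that the isomorphism $\tcadiv(\fan)\cong\CaSF(\fan)$ realizes $D_h$ on $X(\D)$ as an honest principal divisor $\divisor(f\cdot\chi^{u})$ for an explicit $f\cdot\chi^{u}\in K(X)^{\text{hom}}$.

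The key is to read off $(f,u)$ from $h$ via the construction in the proof of $\tcadiv(\fan)\cong\CaSF(\fan)$. There the weight of the local generator is the negative of the linear part of $h$, and the order of $f$ records the constant term: writing $h_P|_\D=\langle -u,\cdot\rangle + c_P$ with linear part $h_t=\langle -u,\cdot\rangle$ on $\tail\D$ and constant term $c_P$, one has $\ord_P f = -c_P$. Applying Proposition~\ref{sec:prop-principal} to $\divisor(f\cdot\chi^{u})$ gives
\[
\sum_{\rho}\langle u,n_\rho\rangle D_\rho + \sum_{(P,v)}\mu(v)\big(\langle u,v\rangle+\ord_P f\big)D_{(P,v)}.
\]
Now $\langle u,n_\rho\rangle=-h_t(n_\rho)$, while for a vertex $v$ of $\D_P$ one computes $\langle u,v\rangle+\ord_P f=-\big(\langle -u,v\rangle+c_P\big)=-h_P(v)$. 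Substituting these two identities turns the expression above into $-\sum_\rho h_t(n_\rho)D_\rho-\sum_{(P,v)}\mu(v)h_P(v)D_{(P,v)}$, which is exactly the asserted formula.

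It remains to see that these chart-wise computations are consistent. The numbers $h_t(n_\rho)$ and $\mu(v)\,h_P(v)$ depend only on $h$ and on the combinatorial datum ($\rho$, respectively $(P,v)$), and not on the chart $X(\D)$, the trivializing index, or the chosen refinement; this is precisely what forces the locally obtained principal divisors to agree on overlaps and to glue to a globally well-defined Weil divisor. I expect the main obstacle to lie in the sign bookkeeping of this dictionary --- specifically, that the generator weight $u$ is the negative of the linear part $h_t$, which is what produces the two global minus signs --- together with checking that the identification of $D_h$ with $\divisor(f\cdot\chi^{u})$ is independent of the auxiliary choices made in the proof of $\tcadiv(\fan)\cong\CaSF(\fan)$.
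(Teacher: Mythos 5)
Your proof is correct and takes essentially the same route as the paper, which derives the corollary exactly by combining Proposition~\ref{sec:prop-principal} with the local description of invariant Cartier divisors coming from the isomorphism $\tcadiv(\fan)\cong\CaSF(\fan)$ (localizing to charts $X(\D)$, after refining via Lemma~\ref{sec:lem-refine} in the affine-locus case and using Definition~\ref{def:casf} in the complete-locus case). Your sign dictionary --- generator weight equal to $-h_t$ on the relevant tailcone and $\ord_P f=-c_P$ --- is precisely the convention appearing in the second half of the paper's proof of that isomorphism (where $h_t:=-u_i$ and $a_P(\D):=-\coeff_P(\divisor f_i)$), and it is the one that makes the stated formula come out, so the bookkeeping you flag as the main obstacle is handled correctly.
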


\begin{cor} 
\label{sec:cor-picard-rank}
Assume $X=X(\fan)$ to be a complete $\QQ$-factorial variety of dimension $n+1$. Denote by $\fan^{(0)}_P$ the set of vertices in $S_P$. Then the Picard number of $X$ is given by
\[\rho_X= 1 + \# \xrays(\fan) + \sum_{P \in Y} (\# \fan_P^{(0)} - 1) - n.\]
\end{cor}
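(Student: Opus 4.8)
The plan is to compute the Picard number $\rho_X = \rank \pic(X)$ by passing through the $T$-invariant Picard group and the divisor class group, both of which have already been described combinatorially in the excerpt. Since $X$ is complete and $\QQ$-factorial, the torus acts with a dense orbit on a dense open set and every divisor class contains an invariant representative; moreover $\QQ$-factoriality means $\pic(X)_\QQ \cong \cl(X)_\QQ$, so it suffices to compute $\rank \cl(X)$, which by the isomorphism $\cl(X) \cong \tcl(X)$ preceding Corollary \ref{cor:divclass} equals the rank of the combinatorially described class group.

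First I would invoke Corollary \ref{cor:divclass}. The free abelian group on the invariant prime divisors has rank equal to the total number of generators, namely $\#\xrays(\fan)$ rays $D_\rho$ together with $\sum_{P} \#\fan_P^{(0)}$ vertex divisors $D_{(P,v)}$. I then quotient by the two families of relations listed there. The first family, $[Z] = \sum_{v \in \fan_Z} \mu(v) D_{(Z,v)}$, is indexed by the points $Z = P$ of $Y$; since $Y$ is a complete curve and $X$ is complete, I would argue that cofinitely many slices $\fan_P$ are trivial (equal to the tailfan), so only finitely many of these relations are nontrivial, and in the rank count each point $P$ with a nontrivial slice contributes one relation that is independent because it is the unique relation whose support meets the fresh divisors over $P$. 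This accounts for the $-1$ per point appearing in the sum $\sum_P(\#\fan_P^{(0)} - 1)$: for each $P$ one of the $\#\fan_P^{(0)}$ vertex divisors is expressed in terms of the others and the class $[Z]$, and the classes $[Z]$ themselves generate $\cl(Y)$.

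The second family, $0 = \sum_\rho \langle u,\rho\rangle D_\rho + \sum_{(Z,v)} \mu(v)\langle u,v\rangle D_{(Z,v)}$ indexed by $u \in M$, is a linear map $M \to \bigoplus \ZZ$ whose image rank I must subtract. The key point is that this map is, up to the lattice factors $\mu(v)$, dual to the map sending each invariant prime divisor to its associated lattice data (the rays $n_\rho$ and the vertices $v$), and completeness of the slices forces these data to span $N_\QQ$, so the relation map has rank exactly $n = \dim N = \dim T$. This contributes the $-n$ term. Collecting everything: starting rank $\#\xrays(\fan) + \sum_P \#\fan_P^{(0)}$, subtracting the point-relations while adding back $\rank\cl(Y)$ gives $\rank\cl(Y) + \#\xrays(\fan) + \sum_P(\#\fan_P^{(0)} - 1)$; subtracting the $n$ independent $M$-relations yields $\rho_X = \rank\cl(Y) + \#\xrays(\fan) + \sum_P(\#\fan_P^{(0)} - 1) - n$. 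Since $X$ is complete and $\QQ$-factorial of Picard number matching the stated formula, $\rank\cl(Y) = 1$ because $Y \cong \PP^1$ in the relevant complete $\QQ$-factorial situation (or more precisely the contribution of $\cl(Y)$ collapses to the single generator counted by the leading $1$).

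The main obstacle, and the step needing the most care, is proving that the two families of relations are jointly independent to exactly the claimed extent — that is, that the $M$-relations have full rank $n$ and that they remain independent from the point-relations modulo which we have already quotiented. I expect this requires exploiting $\QQ$-factoriality (which guarantees the relation lattice is saturated up to finite index and that no unexpected dependencies arise among the $D_\rho$ and $D_{(P,v)}$) together with completeness of the tailfan and of the slices (which guarantees the spanning statement giving rank exactly $n$, neither more nor less). The leading coefficient $1$ should be traced to $\cl(Y) = \pic(\PP^1) = \ZZ$, so identifying why $Y$ must be $\PP^1$ (or why only its rank-one contribution survives) under the completeness and $\QQ$-factoriality hypotheses is the remaining delicate point.
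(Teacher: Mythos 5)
Your overall route is the right one, and in fact it is the paper's (implicit) one: the corollary is stated there without proof as a direct consequence of Corollary \ref{cor:divclass}, exactly via the generator-and-relation count you perform, with the bookkeeping that trivial slices ($\fan_P=$ tailfan, $\#\fan_P^{(0)}=1$) contribute nothing and their relations $[P]=D_{(P,0)}$ cancel the fiber generators. But your identification of the leading $1$ is genuinely wrong: it is not true that $Y\cong\PP^1$ under the hypotheses --- $X=Y\times\PP^n$ with the standard $(\C^*)^n$-action is a smooth complete complexity-one $T$-variety over a curve $Y$ of arbitrary genus. Relatedly, the identification $\rho_X=\rank\cl(X)$ is not literally available, because $\cl(X)$ need not be finitely generated: by Corollary \ref{cor:divclass} it contains the image of $\cl(Y)$, hence (for $g(Y)\geq 1$) a divisible piece coming from $\pic^0(Y)$. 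The correct mechanism is that $\rho_X$ is the rank of the N\'eron--Severi group, i.e.\ one works modulo algebraic (or numerical) equivalence; there all points $P\in Y$ give the same class, $\pic^0(Y)$ dies, and $\cl(Y)$ contributes exactly rank one via the degree map, for every genus. That is where the $1$ in the formula comes from; your hedge (``the contribution of $\cl(Y)$ collapses'') is the right instinct but is left unjustified, and the justification you do offer ($Y\cong\PP^1$) is false.

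The second gap is the independence of the relations, which you correctly flag as the delicate point but misattribute to $\QQ$-factoriality: $\QQ$-factoriality is only used to pass from Cartier to Weil classes (so that $\rho_X$ equals the rank of invariant Weil classes modulo numerical equivalence); it does nothing for independence. What must be excluded is the existence of $0\neq u\in M$ and constants $c_P$ with $\sum_P c_P=0$, $\langle u,n_\rho\rangle=0$ for all extremal rays $\rho$, and $\langle u,v\rangle=-c_P$ for all vertices $v\in\fan_P^{(0)}$; note that your spanning argument only treats the case $c_P\equiv 0$, and even there the vertices alone need not span $N_\QQ$ (a trivial slice has the single vertex $0$), so the extremal rays must be brought in --- and whether a ray is extremal is a properness constraint ($\deg\D\subsetneq\tail\D$), not a purely combinatorial completeness statement. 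The clean argument is geometric: by Proposition \ref{sec:prop-principal}, such data would give
\[\divisor(\chi^u)\;=\;-\sum_P c_P\,\Big(\sum_{v\in\fan_P^{(0)}}\mu(v)D_{(P,v)}\Big),\]
a combination of fibers of the quotient map to $Y$; then $\chi^u$ is regular and invertible on a general (complete) fiber, hence constant on it, hence lies in $K(Y)$, i.e.\ has weight $0$ --- contradicting $u\neq 0$. In the special case $c_P\equiv 0$ this is simply: a semi-invariant function with trivial divisor on the complete variety $X$ is a unit in $\Gamma(X,\O_X)=k$, hence constant. With these two repairs your count gives $\rho_X=1+\#\xrays(\fan)+\sum_P(\#\fan_P^{(0)}-1)-n$ as claimed.
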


\begin{thm}
\label{sec:thm-canonical-divisor}
For the canonical class of $X=X(\fan)$ we have that
\[K_X = \sum_{(P,v)} \left(\mu(v) K_Y(P) + \mu(v) - 1 \right)\cdot D_{(P,v)} - \sum_{\rho}D_\rho \,.\]
\end{thm}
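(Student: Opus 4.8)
The plan is to compute the canonical divisor by combining three ingredients already assembled in the paper: the description of prime divisors from Corollary~\ref{sec:cor-prime-divs}, the translation formula from Cartier data to Weil data in Corollary~\ref{sec:cor-cartier2weil}, and the birational description of $X$ as essentially a torus bundle over $Y$ coming from the remark that $X$ is birationally equivalent to $T_M \times Y$. The idea is that a canonical divisor can be obtained as the divisor of a distinguished rational top form, and since the invariant prime divisors are exactly the $D_\rho$ and the $D_{(P,v)}$, it suffices to compute the order of vanishing of this form along each of them.

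First I would fix a rational differential form representing $K_X$. The birational identification $X \sim T_M \times Y$ lets me write a canonical form as a wedge of the $T$-invariant logarithmic forms $\mathrm{d}\chi^{e_i^*}/\chi^{e_i^*}$ on the torus factor (for a $\ZZ$-basis $e_1,\dots,e_n$ of $N$ with dual basis $e_i^*$) together with a rational $1$-form $\omega_Y$ on the curve $Y$; concretely something of the shape $\omega = f \cdot \chi^u \cdot \frac{\mathrm{d}\chi^{e_1^*}}{\chi^{e_1^*}} \wedge \cdots \wedge \frac{\mathrm{d}\chi^{e_n^*}}{\chi^{e_n^*}} \wedge \omega_Y$ for a suitable semi-invariant $f\chi^u$. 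The key point is that $\divisor(\omega_Y)$ on $Y$ realizes $K_Y$, so $\sum_P \coeff_P(K_Y)\,P$ enters the computation, and the torus-invariant logarithmic form has no zeros or poles along the generic torus directions, leaving only the contributions along the two families of invariant divisors.

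Then I would compute orders along each prime divisor separately, exactly as in the proof of Proposition~\ref{sec:prop-principal}. Along a divisor $D_\rho$ of type~\ref{item:dimT-1}, working in the local model $\Gamma(\CO_X)=\Gamma(\CO_Y)[\chi^{e_1^*},\chi^{\pm e_2^*},\dots]$ with $e_1=n_\rho$, the logarithmic form $\mathrm{d}\chi^{e_1^*}/\chi^{e_1^*}$ has a simple pole along $\chi^{e_1^*}=0$, producing the coefficient $-1$ on $D_\rho$. Along a divisor $D_{(P,v)}$ of type~\ref{item:dimT}, I would use the same coordinate change $y=t_P^a\chi^{be_1^*}$, $t_P^{-\varepsilon(v)}\chi^{\mu(v)e_1^*}$ from the proof of Proposition~\ref{sec:prop-principal}, rewrite the differential form in the variables adapted to $y$ (the local parameter for $D_{(P,v)}$), and carefully track two effects: the order coming from $\omega_Y$ (contributing $\mu(v)K_Y(P)$ after multiplying by the ramification index $\mu(v)$ of the quotient map along this divisor) and the Jacobian of the coordinate change $\mathrm{d}t_P\wedge \mathrm{d}\chi^{e_1^*} \mapsto \mathrm{d}y \wedge (\cdots)$, which contributes the extra $\mu(v)-1$. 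Summing these local contributions via Corollary~\ref{sec:cor-cartier2weil} then yields the stated formula.

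The main obstacle I expect is the bookkeeping of the ramification and the Jacobian term along the $D_{(P,v)}$ divisors, i.e.\ pinning down precisely why the coefficient is $\mu(v)K_Y(P) + \mu(v)-1$ rather than something off by the factor $\mu(v)$ or the additive $-1$. The subtlety is that the quotient map $X \to Y$ (or the intermediate $\widetilde{X}$) is ramified to order $\mu(v)$ along $D_{(P,v)}$, so the pullback of $\omega_Y$ scales the $P$-coefficient by $\mu(v)$, while the logarithmic pole structure of the torus form, after passing to the non-primitive lattice direction $v$ with $\mu(v)v=\varepsilon(v)n_v$, contributes the ramification correction $\mu(v)-1$ familiar from the Riemann--Hurwitz / toric canonical-divisor formula ($K = -\sum D_\rho$ with the $-1$'s). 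I would verify this most cleanly by checking the affine toric model explicitly (where $Y$ is a point and the formula must reduce to the classical $K_X = -\sum_\rho D_\rho$), and then confirming that the $Y$-contribution assembles correctly by comparing against the class-group relations of Corollary~\ref{cor:divclass}, in particular the relation $[Z] = \sum_{v\in\fan_Z}\mu(v)D_{(Z,v)}$, which guarantees consistency of the $K_Y$ pullback term.
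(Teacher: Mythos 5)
Your proposal is correct and follows essentially the same route as the paper: the paper likewise fixes the invariant top form $\omega_X=\omega_Y\wedge\frac{d\chi^{e_1^*}}{\chi^{e_1^*}}\wedge\cdots\wedge\frac{d\chi^{e_n^*}}{\chi^{e_n^*}}$ (with no extra $f\chi^u$ factor needed) and computes its order along each invariant prime divisor in exactly the local models of Proposition~\ref{sec:prop-principal}, using the local parameter $t_P^a\chi^{be_1^*}$ at $D_{(P,v)}$ to get $\ord_{D_{(P,v)}}=(\ord_P(f_P)+1)\mu(v)-1=\mu(v)K_Y(P)+\mu(v)-1$ and the simple pole of $\frac{d\chi^{e_1^*}}{\chi^{e_1^*}}$ to get $-1$ on $D_\rho$. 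Your decomposition of the coefficient into the $\mu(v)$-ramification of $\omega_Y$ plus the $\mu(v)-1$ Jacobian correction is precisely how the paper's computation produces it.
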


\begin{proof}
Let $\omega_Y \in \Omega^1(Y)$ a (rational) differential form. Then $K_Y$ is given by $K_Y=\divisor \omega_Y$.
For a given $P \in Y$ we have a representation $\omega_Y= f_P d t_P$, where $f_P \in K(Y)$ and $t_P$ a local parameter of $P$

We define a differential form $\omega_X$ by 
\[\omega_X = \omega_Y \wedge \frac{d\chi^{e^*_1}}{\chi^{e^*_1}} \wedge \ldots \wedge \frac{d\chi^{e^*_n}}{\chi^{e^*_n}},\]
with $e^*_1,\ldots e^*_n$ being a $\ZZ$-basis of $M$.

For a prime divisor $(P,v)$ we may choose a $\ZZ$-Basis $e_1,\ldots, e_n$ of $N$ with  $e_1 = n_v$. Consider the dual basis $e^*_1,\ldots, e^*_n$. As $\mu$ and $\varepsilon(v)$ are coprime we may choose $a,b \in \ZZ$ with $a \mu(v) + b \varepsilon(v)=1$. Hence, $t_P^a \chi^{be^*_1}$ is a local parameter associated to $(P,v)$. It is then easy to see that we have the following local representation 
\[\omega_X= \frac{f_P}{a t_P^{a-1} \chi^{be^*_1}} d (t_P^a \chi^{be^*_1}) \wedge \frac{d\chi^{e^*_1}}{\chi^{e^*_1}} \wedge \ldots \wedge \frac{d\chi^{e^*_n}}{\chi^{e^*_n}}.\]
Then Corollary \ref{sec:cor-cartier2weil} implies $\ord_{D_{(P,v)}}(\frac{f_P}{a t_P^{a-1} \chi^{be^*_1}}) = (\ord_P(f_P)+1)\mu(v) - 1$.

For a prime divisor $D_\rho$ of $X$ we choose a $\ZZ$-basis  $e_1,\ldots, e_n$ of $N$ with $e_1 = n_\rho$. Again we consider the dual basis $e^*_1,\ldots, e^*_n$. Then $\chi^{e^*_1}$ is a local parameter for $D_\rho$ and we have a local representation
\[\omega_X = \omega_Y \wedge \frac{d\chi^{e^*_1}}{\chi^{e^*_1}} \wedge \ldots \wedge \frac{d\chi^{e^*_n}}{\chi^{e^*_n}},\]

We immediately find that $\ord_{D_\rho}(\omega_Y)=0$ and $\ord_{D_\rho}(\frac{d\chi^{e^*_1}}{\chi^{e^*_1}} \wedge \ldots \wedge \frac{d\chi^{e^*_n}}{\chi^{e^*_n}})=-1$.
\end{proof}


\subsection{Global sections}
\label{sec:global-sections} 
For an invariant Cartier-Divisor $D_h$ on $X$ we may consider the $M$-graded module of global sections 
\[L(h)= \bigoplus_{u \in M} L(h)_u = \bigoplus_{u \in M} L(D_h)_u := \Gamma(X,\CO(D_h)).\]
The weight set of $h$ is defined as $\{u \in M \mid L(h)_u \neq 0\}$. 
For a Cartier divisor $D_h$ given by $h \in \CaSF(\fan)$ we will bound its weight set by a polyhedron and describe 
the graded module structure of $L(h)$.

\begin{defn}
Given a support function $h = (h_P)_P$ with linear part $h_t$ its {\em associated polyhedron} is given by
\[\Box_h := \Box_{h_t}:=\left\{u \in M_\QQ \mid  \langle u, v \rangle  \geq  h_t(v) \; \textnormal{ for all } v \in |\tail \fan| \right\}\,.\]
We furthermore define the {\em dual function} $h^*:\Box_h \rightarrow \wdiv_\QQ Y$ to $h$ by
\[h^*(u):=\sum_P h_P^*(u)P := \sum_P \minvert(u-h_P)P,\]
where $\minvert(u - h_P)$ denotes the minimal value of $u - h_P$ on the vertices of $\fan_P$.
\end{defn}

\begin{prop}
\label{prop:global_sections}
Let $h \in \tcadiv(\fan)$  be a Cartier divisor with linear part $h_t$. Then
\begin{enumerate}
\item The weight set of $L(h)$ is a subset of $\Box_{h}$.
\item For $u \in \Box_{h}$ we have
\[L(h)_u = \Gamma(Y,\CO(h^*(u))).\]
\end{enumerate}
\end{prop}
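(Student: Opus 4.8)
The plan is to compute each graded piece $L(h)_u$ explicitly by intersecting the effectivity conditions attached to the two types of invariant prime divisors, and then to read off both assertions from the resulting description. First I would fix $u \in M$ and recall from the remark on $K(X)^{\text{hom}}$ that a nonzero homogeneous element of $L(h)_u$ is a semi-invariant rational function $f\cdot\chi^u$ with $f \in K(Y)$, lying in $L(h)_u$ exactly when $\divisor(f\chi^u) + D_h \geq 0$. Combining the description of $D_h$ as a Weil divisor (Corollary~\ref{sec:cor-cartier2weil}) with the formula for $\divisor(f\chi^u)$ (Proposition~\ref{sec:prop-principal}, in the complexity-one form of Corollary~\ref{sec:cor-prime-divs}), the coefficient of $D_{(P,v)}$ in this sum is $\mu(v)\bigl(\langle u,v\rangle + \ord_P f - h_P(v)\bigr)$ and the coefficient of $D_\rho$ is $\langle u,n_\rho\rangle - h_t(n_\rho)$. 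Since $\mu(v)>0$, effectivity is equivalent to the two families of inequalities
\[\ord_P f \ \geq\ h_P(v) - \langle u,v\rangle \quad \text{for all } (P,v), \qquad \langle u,n_\rho\rangle \ \geq\ h_t(n_\rho) \quad \text{for all } \rho \in \xrays(\fan).\]

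For part~(2) I would observe that the first family constrains only $f$: for fixed $P$ it reads $\ord_P f \geq \max_v\bigl(h_P(v)-\langle u,v\rangle\bigr) = -h_P^*(u)$, so taking all $P$ together it is equivalent to $\divisor(f) + h^*(u) \geq 0$, i.e.\ to $f \in \Gamma(Y,\CO(h^*(u)))$. The second family does not involve $f$, and for $u \in \Box_h$ it holds automatically, since by definition $\langle u,v\rangle \geq h_t(v)$ for every $v \in |\tail\fan|$, in particular for every primitive generator $n_\rho$. Hence for $u \in \Box_h$ the assignment $f\chi^u \mapsto f$ identifies $L(h)_u$ with $\Gamma(Y,\CO(h^*(u)))$, which is exactly assertion~(2) (both sides being zero when $h^*(u)$ fails to be effective).

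For part~(1) I would argue chart by chart that a nonzero section forces $u \in \Box_h$. A nonzero $f\chi^u \in L(h)_u$ is a nonzero rational function, hence restricts to a nonzero section on every affine chart $X(\D)$; so it suffices to show that $\Gamma(X(\D),\CO(D_h))_u \neq 0$ implies $\langle u,v\rangle \geq h_t(v)$ for all $v \in \tail\D =: \sigma$, as the union over $\D \in \fan$ then yields $\langle u,v\rangle \geq h_t(v)$ on $|\tail\fan| = \bigcup_\D \sigma$, that is, $u \in \Box_h$. The crucial point, and the step I expect to require the most care, is the dichotomy between affine and complete locus. If $\loc\D$ is affine then $\D$ carries an $\emptyset$-coefficient, so $\deg\D = \emptyset$ and \emph{every} ray of $\sigma$ is extremal for $\D$ by Corollary~\ref{sec:cor-prime-divs}; the second inequality family above then holds for every ray of $\sigma$, which is precisely $\langle u - u_\sigma, n_\rho\rangle \geq 0$ for all $\rho$, i.e.\ $u - u_\sigma \in \sigma^\vee$, where $u_\sigma \in M$ represents the linear part $h_t|_\sigma$. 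This is the required inequality on all of $\sigma$ by convexity.

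If instead $\loc\D$ is complete, the extremal rays no longer exhaust the rays of $\sigma$, so the inequalities alone are not enough and one must use the Cartier hypothesis. By Definition~\ref{def:casf} the restriction $h|_\D$ is principal, so—reading off the sign from Corollary~\ref{sec:cor-cartier2weil}—the divisor $D_h|_{X(\D)}$ is the principal divisor of a semi-invariant $g\chi^{u_\sigma}$ whose weight $u_\sigma$ again represents $h_t|_\sigma$. Trivializing $\CO(D_h)|_{X(\D)}$ by this generator identifies $\Gamma(X(\D),\CO(D_h))_u$ with the weight-$(u-u_\sigma)$ component of the coordinate ring $\bigoplus_{u' \in \sigma^\vee \cap M}\Gamma(\loc\D,\CO(\D(u')))$ of $X(\D)=\pdv(\D)$, which vanishes unless $u - u_\sigma \in \sigma^\vee$; this is once more $\langle u,v\rangle \geq h_t(v)$ on $\sigma$. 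In either case a nonzero weight-$u$ section forces the cone inequality on $\tail\D$, and assembling these over all $\D \in \fan$ completes the proof of~(1). I expect the only genuine subtleties to be keeping the sign in the trivialization consistent with Corollary~\ref{sec:cor-cartier2weil}, and the observation that affine locus forces $\deg\D=\emptyset$ and hence that all rays are extremal.
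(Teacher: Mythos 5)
Your proposal is correct, and for part~(2) it coincides with the paper's proof: both compute the coefficient of $D_{(P,v)}$ in $\divisor(f\chi^u)+D_h$ as $\mu(v)\left(\langle u,v\rangle+\ord_P f-h_P(v)\right)$ and that of $D_\rho$ as $\langle u,n_\rho\rangle-h_t(n_\rho)$, and both repackage the vertex conditions as $\divisor(f)+h^*(u)\geq 0$, i.e.\ $f\in\Gamma(Y,\CO(h^*(u)))$. The genuine difference is in part~(1). The paper simply asserts that the ray inequalities ``imply $u\in\Box_{h}$,'' but those inequalities are only available at \emph{extremal} rays: by Corollary~\ref{sec:cor-prime-divs} a ray with $\deg\D\cap\rho\neq\emptyset$ contributes no prime divisor $D_\rho$, so the effectivity conditions are silent there, and the paper leaves this case implicit. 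Your chart-by-chart dichotomy supplies exactly the missing step: for affine locus, $\D$ carries an $\emptyset$-coefficient, hence $\deg\D=\emptyset$ and every ray of $\tail\D$ is extremal, so the inequalities propagate to all of $\tail\D$ by convexity (tailcones are pointed, hence generated by their rays); for complete locus you correctly fall back on the Cartier hypothesis of Definition~\ref{def:casf} (in the spirit of Proposition~\ref{sec:prop-complete-picard}), trivialize $\CO(D_h)|_{\pdv(\D)}$ by the principal generator, and use that the weight monoid of the coordinate ring of $\pdv(\D)$ is $\sigma^\vee\cap M$. The only blemish is the sign you yourself flagged: if $h_t|_\sigma=\langle u_\sigma,\cdot\rangle$, then Corollary~\ref{sec:cor-cartier2weil} makes the local semi-invariant generator have weight $-u_\sigma$ rather than $u_\sigma$; your conclusion $u-u_\sigma\in\sigma^\vee$, i.e.\ $\langle u,\cdot\rangle\geq h_t$ on $\sigma$, is nevertheless the correct one. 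In short, your route is longer than the paper's two-line computation, but it buys an honest verification of~(1) at precisely the rays the paper's proof does not mention.
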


\begin{proof}
By definition of $\CO(D_h)$ we have
\[\Gamma(X,\CO(h))^{T} = \{ f\chi^u   \mid  \divisor(f\chi^u) - \sum_{\rho} h_t (n_\rho) D_\rho - \sum_{(P,v)} \mu(v) h_P(v) D_{(P,v)} \geq 0\}.\]
But $\divisor(f\chi^u) = \sum_{\rho} \langle u, n_\rho \rangle D_\rho + \sum_{(P,v)} \mu(v) (\langle u, v \rangle + \ord_P(f)) D_{(P,v)}$, so for $f\chi^u \in L(h)$ we get the following bounds:
\begin{enumerate}
\item $\langle u, n_\rho \rangle \geq h_t(n_\rho) \; \forall \,\rho$
\item $\ord_P(f) + \langle u, v \rangle \geq  h_P(v) \; \forall \,(P,v)$
\end{enumerate}
The first implies that $u \in \Box_{h_t}$, whereas the second yields $\ord_P(f) + (u - h_P)(v) \geq 0$ for all $(P,v)$.
\end{proof}

\begin{ex}
\label{ex:hirzebruch}
Let us consider the Hirzebruch surface $\F_2$ together with the line bundle $L=\O(D)$ which is given through the following generators on the maximal cones $\sigma_i$
\[u_{\sigma_0} = [0 \; 0]\,,\quad u_{\sigma_1} = [1 \; 0]\,,\quad u_{\sigma_2} = [3 \; 1]\,,\quad u_{\sigma_3} = [0 \; 1]\,.\]
It is very ample and defines an embedding into $\PP^5$. One can describe the embedding by a polytope $\Delta \subset M_{\Q} = \Q^2$ which is the convex hull of the $u_{\sigma_i}$ and has six lattice points which are a basis of $\Gamma(\mathbb{F}_2,\O(D))$.

Considering the piecewise linear function $\phi_L$ given by $\{u_{\sigma_i}\;|\; 0 \leq i \leq 3 \}$, one obtains the graph of $h_P$ over $\fan_P$  by evaluating $\phi_L$ along the dotted slices corresponding to $P$. As usual $h_t$ denotes the piecewise linear function on the tailfan. In the following figure, the number over each cone denotes the slope of the corresponding restriction of $h_P$.

\begin{figure}[htbp]
\label{fig:hirzebruch}
\psset{unit=0.95cm}
\begin{pspicture}(0,-6)(13,0)
\psline{<-|}(1,-3)(3,-3)
\psline{->}(3,-3)(5,-3)
\rput[bl]{0}(12,-3){$h_t$}

\psline{<-|}(1,-1.5)(2,-1.5)
\psline{-|}(2,-1.5)(3,-1.5)
\psline{|->}(3,-1.5)(5,-1.5)
\uput*[270](2,-1.5){{\tiny $-\frac{1}{2}$}}
\rput[bl]{0}(12,-1.5){$h_0$}
\rput[bl]{0}(4,-1.3){$\D_{\sigma_0}$}
\rput[bl]{0}(1.2,-1.3){$\D_{\sigma_2}$}
\rput[bl]{0}(2.2,-1.3){$\D_{\sigma_1}$}

\psline{<-|}(1,-4.5)(3,-4.5)
\psline{->}(3,-4.5)(5,-4.5)
\rput[bl]{0}(12,-4.5){$h_{\infty}$}
\rput[bl]{0}(4,-4.3){$\D_{\sigma_3}$}
\rput[bl]{0}(1.5,-4.3){$\D_{\sigma_2}$}


\psline[linestyle=dotted]{<-|}(7,-3)(9,-3)
\psline{->}(9,-3)(11,-3)
\psline(7,-4)(9,-3)
\psline(9,-3)(11,-3)
\uput*[90](10,-3){$0$}
\uput*[90](8,-3){$3$}

\psline[linestyle=dotted]{<-|}(7,-1.5)(8,-1.5)
\psline[linestyle=dotted]{-|}(8,-1.5)(9,-1.5)
\psline[linestyle=dotted]{->}(9,-1.5)(11,-1.5)
\psline(7,-2.16)(8,-1.66)
\psline(8,-1.66)(9,-1.5)
\psline(9,-1.5)(11,-1.5)
\uput*[90](10,-1.5){$0$}
\uput*[90](8.5,-1.5){$1$}
\uput*[90](7.5,-1.5){$3$}

\psline[linestyle=dotted]{<-|}(7,-4.5)(9,-4.5)
\psline[linestyle=dotted]{|->}(9,-4.5)(11,-4.5)
\psline(7,-6)(9,-5)
\psline(9,-5)(11,-5)
\uput*[90](10,-4.5){$0$}
\uput*[90](8,-4.5){$3$}

\rput[bl]{0}(3,-6){$\fan$}
\rput[bl]{0}(9,-6){$h$}

\end{pspicture}
\caption{The graphs of $h_0$, $h_t$ and $h_\infty$ over the corresponding slice $\fan_P$.}
\end{figure}
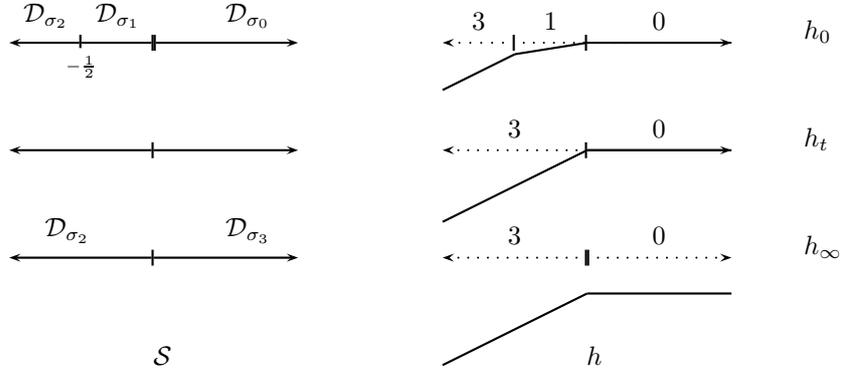

By Proposition \ref{prop:global_sections} we have $\Box_h = \{ u\in \Z\, | \; 3 \geq u \geq 0 \}$, and
\[\begin{array}{ll} L(h)_0 = \Gamma \big(\PP^1,\O(\{\infty\})\big)\,, & L(h)_1 = \Gamma \big(\PP^1,\O(\{\infty\})\big)\,, \\ L(h)_2 = \Gamma \big(\PP^1,\O(\{\infty\}- 1/2\{0\})\big)\,, & L(h)_3 = \Gamma \big(\PP^1,\O(\{\infty\}-\{0\})\big)\,. \end {array} \]
Altogether they sum up to a six dimensional vector space. We complete the example by a figure of $h^*$, see Figure \ref{fig:hstarHirzebruch}.

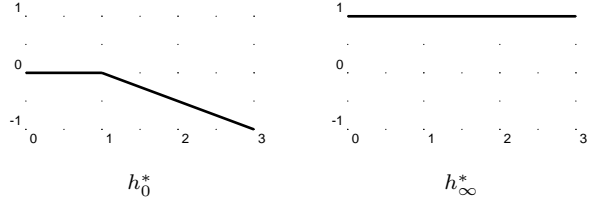
\begin{figure}[h]
\centering
\subfigure[$h^*_0$]{
 \psset{yunit=0.75cm}
 \begin{pspicture}(0,-1.5)(3,1.5)%
  \psgrid[gridwidth=0.5pt,griddots=2,subgriddiv=0,gridlabels=5pt](0,-1.5)(3,1.5)
  \psline[linewidth=1pt]{-}(0,0)(1,0)
  \psline[linewidth=1pt]{-}(1,0)(3,-1)
 \end{pspicture}}\hspace*{1cm}
\subfigure[$h^*_{\infty}$]{
 \psset{yunit=0.75cm}
 \begin{pspicture}(0,-1.5)(3,1.5)%
  \psgrid[gridwidth=0.5pt,griddots=2,subgriddiv=0,gridlabels=5pt](0,-1.5)(3,1.5)
  \psline[linewidth=1pt]{-}(0,1)(3,1)
 \end{pspicture}}
\caption{The graphs of $h^*_0$ and $h^*_\infty$ over $\Box_h$, cf.\ Example \ref{ex:hirzebruch}.}
\label{fig:hstarHirzebruch}
\end{figure}

\end{ex}

\begin{ex} 
\label{ex:cotan}
As another example we consider $X =\PP(\Omega_{\PP^2})$ which is a complete threefold $X$ with a two dimensional torus action. Its divisorial fan $\fan$ over $\PP^1$ looks like Figure~\ref{fig:cotan}, cf. \cite[8.5]{divfans}.

\begin{figure}[b]
\centering
\subfigure[$\fan_0$]{\cotangfannullv}
\subfigure[$\fan_\infty$]{\cotangfaninftyv}
\subfigure[$\fan_1$]{\cotangfanonev}
\caption{Divisorial fan of $\PP(\Omega_{\PP^2})$, cf.\ Example \ref{ex:cotan}.}
\label{fig:cotan}
\end{figure}

Note that all polyhedral divisors have complete locus. We want to compute $\Gamma(X,-K_X)$, and use $K_{\PP^1} = -2\{0\}$ as a representation of the canonical divisor on $\PP^1$. By Theorem \ref{sec:thm-canonical-divisor} we have that 
\[ -K_X = 2D_{(\elements{0},(0,0))} + 2D_{(\elements{0},(0,1))} \,.\]
Using Corollary \ref{sec:cor-cartier2weil}, we can construct $h$ explicitly. We have $h_t(\rho_i)=-2$ for $1\leq i \leq 6\,,$ providing the weight polytope $\Box_h$ 
in Figure~\ref{fig:wpolytope}.

The next list displays the induced divisor $h^*(u)$ on $\PP^1$ for every weight $u=(u_1,u_2)\in \Box_h$, where a triple $(a,b,c)$ corresponds to $D(a,b,c) = a\{0\}+b\{\infty\}+ c\{1\}$.
\[\hspace*{-1.8ex}\begin{array}{ccc} \begin{array}{ccc} (0,0) & \mapsto & (2,0,0) \\ (1,0) & \mapsto & (2,-1,0) \\ (2,0) & \mapsto & (2,-2,0) \\ (-1,0) & \mapsto & (2,0,-1) \\ (-2,0) & \mapsto & (2,0,-2) \\ (0,1) & \mapsto & (2,-1,0) \\ (0,2) & \mapsto & (2,-2,0) \end{array} & \begin{array}{ccc} (0,-1) & \mapsto & (1,0,0) \\ (0,-2) & \mapsto & (0,0,0) \\ (-1,1) & \mapsto & (2,0,-1) \\ (-2,1) & \mapsto & (2,0,-2) \\ (-2,2) & \mapsto & (2,0,-2) \\ (-1,2) & \mapsto & (2,-1,-1) \\ (-1,-1) & \mapsto & (1,0,-1) \end{array} & \begin{array}{ccc} (1,1) & \mapsto & (2,-2,0) \\ (1,-1) & \mapsto & (1,0,0) \\ (2,-1) & \mapsto & (1,-1,0) \\ (2,-2) & \mapsto & (0,0,0) \\ (1,-2) & \mapsto & (0,0,0)  \\ & & \\ & & \end{array} \end{array}\]
Summing up yields $\dimension \Gamma(X,-K_X) = 27$. Furthermore we compute \[\rho_X = 1 + 0 + 3 -2 = 2,\] which is of course a classical result.
\end{ex}

\begin{figure}[htbp]
\centering
\psset{xunit=.8cm,yunit=.6cm}
\begin{pspicture}(-3,-3)(3,3)%
\newgray{gray1}{0.7}
\pspolygon[linewidth=1pt,fillstyle=solid,fillcolor=gray1](0,-2)(2,-2)(2,0)(0,2)(-2,2)(-2,0)
\psgrid[gridwidth=0.3pt,griddots=2,subgriddiv=1,gridlabels=5pt](0,0)(-3,-3)(3,3)
\end{pspicture}
\caption{The weight polytope $\Box_h$, cf.\ Example \ref{ex:cotan}.}
\label{fig:wpolytope}
\end{figure}

\subsection{Positivity of line bundles}
\label{sec:ample_intersection}
The goal of this section is to determine criteria for the ampleness of an invariant Cartier divisor and to give a method how to compute intersection numbers of semiample invariant Cartier divisors. We assume $\fan$ to be complete. Denote its tailfan by $\Sigma$.

\begin{defn}
For a cone $\sigma \in \Sigma(n)$ of maximal dimension in the tailfan and a point $P \in Y$ we get exactly one polyhedron $\Delta^\sigma_P \in \fan_P$ having tail $\sigma$. For a given support function $h = (h_P)_P$ we have
\[h_P|_{\Delta^\sigma_P} = \langle u^h(\sigma), \cdot \rangle + a^h_P(\sigma).\]
The constant part gives rise to a divisor on $Y$:
\[h|_{\sigma}(0) := \sum_P a^h_P(\sigma) P.\]
\end{defn}

\begin{thm}
\label{sec:thm-ample}
A $T$-Cartier divisor  $h \in \tcadiv(\fan)$ is semiample iff all $h_P$ are concave and $\deg h|_\sigma(0) < 0$ or some multiple of  $-h|_\sigma(0)$ is principal, i.e.\ $-h|_\sigma(0)$ is semiample.
\end{thm}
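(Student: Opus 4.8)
The plan is to reduce the global semiampleness statement to a collection of local (affine-chart) semiampleness statements, one for each maximal cone $\sigma \in \Sigma(n)$, and then to translate each local statement into the two stated conditions via the properness criterion from Remark \ref{sec:rmk-cod1}. Recall that semiampleness is a local-on-$Y_0$ property in the following sense: a Cartier divisor $D_h$ is semiample precisely when it is base-point-free after passing to a multiple, which can be checked on an invariant affine cover. So the first step is to observe that $D_h$ is semiample if and only if its restriction to each chart $X(\D^\sigma)$ arising from a maximal cone $\sigma$ is semiample, where $\D^\sigma \in \fan$ is the polyhedral divisor whose tailcone is $\sigma$ and whose $P$-coefficient is the polyhedron $\Delta^\sigma_P$. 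This is where concavity will enter: on a single slice $\fan_P$, the function $h_P$ being concave is exactly the condition that $h_P$ agrees with the support function of a \emph{genuine} polyhedral divisor (rather than merely a piecewise-affine function on a subdivision that fails to come from an honest convex $\D$), so that the $D_h(u)$ are well-defined and the weight polytope $\Box_h$ computed in Proposition \ref{prop:global_sections} governs global sections.

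Next I would pin down, for a fixed maximal $\sigma$, what semiampleness of $D_h|_{X(\D^\sigma)}$ means combinatorially. Writing $h_P|_{\Delta^\sigma_P} = \langle u^h(\sigma),\cdot\rangle + a^h_P(\sigma)$, the restriction of $D_h$ to this chart is, up to a globally-principal twist by $u^h(\sigma)$, encoded by the $Y$-divisor $h|_\sigma(0) = \sum_P a^h_P(\sigma)P$. The key point is that $X(\D^\sigma)$ has either affine or complete locus, and these two cases produce the two disjuncts in the theorem. When $\loc \D^\sigma$ is affine (which happens when $\deg \D^\sigma \subsetneq \sigma$ strictly, equivalently when $\sigma$ meets $\deg \fan$ non-extremally), semiampleness on the affine chart is automatic once $h_P$ is concave, and the relevant evaluation $\D(u)$ is forced to be big/principal by the properness criterion; this is the branch governed by $\deg h|_\sigma(0) < 0$. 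When $\loc \D^\sigma = Y$ is complete, Proposition \ref{sec:prop-complete-picard} tells us $\tpic$ is trivial, so $h|_{\D^\sigma}$ must be principal, and semiampleness of $D_h$ forces $-h|_\sigma(0)$ itself to be semiample as a divisor on the curve $Y$ — which for a complete curve is the condition that some multiple be principal or that the degree be suitably nonnegative. I would make the precise dictionary here using the degree evaluation $\deg \D(u) = \sum_P \min_{v}\langle u,v\rangle$ together with the fact that on a smooth projective curve a divisor is semiample iff its degree is nonnegative (with the degree-zero case requiring a principal multiple).

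I expect the main obstacle to be the careful handling of the boundary case, i.e.\ disentangling the ``$\deg h|_\sigma(0) < 0$ \emph{or} $-h|_\sigma(0)$ semiample'' disjunction so that the two branches correctly cover the affine-locus and complete-locus charts without overlap-induced contradictions, and in verifying that the concavity of \emph{all} the $h_P$ simultaneously is both necessary and sufficient to glue the chart-wise semiample data into a global semiample $D_h$. Concretely, necessity of concavity requires showing that if some $h_P$ fails to be concave along a wall between two maximal polyhedra then $D_h(u)$ fails to be base-point-free in the corresponding direction; I would extract this from the weight-polytope description in Proposition \ref{prop:global_sections}, since non-concavity of $h_P$ manifests as the dual function $h^*$ failing to be concave in $u$, which obstructs the global sections from generating $\O(D_h)$ at the orbit over $P$. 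Sufficiency is the easier direction: given concavity and the degree/principality condition on each $h|_\sigma(0)$, one directly exhibits enough global sections from $\Box_h$ via Proposition \ref{prop:global_sections} to conclude base-point-freeness after passing to a multiple.
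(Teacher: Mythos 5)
Your opening reduction is the problem: the claim that $D_h$ is semiample if and only if its restriction to each chart $X(\D^\sigma)$ is semiample is false, and the rest of the plan leans on it. Semiampleness asks that a multiple of $D_h$ be generated by \emph{global} sections; it is not a chart-local condition. On a chart with affine locus every line bundle is generated by its sections over that chart, and on a chart with complete locus $h|_\D$ is principal by the very definition of Cartier support functions (Definition \ref{def:casf}, via Proposition \ref{sec:prop-complete-picard}), so ``semiample on every chart'' holds for \emph{every} $h \in \CaSF(\fan)$ and your criterion would degenerate to ``concavity alone suffices,'' which is wrong. You sense this and try to recover the degree branch from the properness criterion of Remark \ref{sec:rmk-cod1}, but properness constrains the polyhedral divisors of $\fan$, not the support function $h$: it holds for all $\D \in \fan$ regardless of $h$, so it cannot force $\deg h|_\sigma(0) < 0$. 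Note also that the condition is irreducibly global on $Y$: $h|_\sigma(0) = \sum_P a^h_P(\sigma)P$ aggregates coefficients at \emph{all} points of $Y$, whereas an affine-locus chart sees only $\loc \D \subsetneq Y$; moreover there need not exist a single $\D^\sigma \in \fan$ carrying all the polyhedra $\Delta^\sigma_P$ — they may be spread over several divisors with affine loci. Finally, you attach the two disjuncts to the wrong loci: for a complete-locus chart $-h|_\sigma(0)$ is automatically principal (hence trivially semiample) by Cartier-ness, so no condition arises there; the content of ``$\deg h|_\sigma(0)<0$ or a multiple of $-h|_\sigma(0)$ principal'' sits precisely at cones coming from affine loci, where your plan predicts no condition at all.

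The paper never localizes semiampleness; it manufactures global sections directly, which is exactly the step your reduction cannot reach. Concavity of all $h_P$ makes the $u^h(\sigma)$ the vertices of $\Box_h$ with $h^*(u^h(\sigma))$ equal to $h|_\sigma(0)$ (up to the sign convention), and since convex combinations $D + \lambda(D'-D)$ of semiample divisors on $Y$ are semiample, the hypothesis at the vertices gives semiampleness of $h^*(u)$ for every $u \in \Box_h \cap M$. Passing to a multiple of $h$, a global section $f \in \Gamma(Y,\O(h^*(u)))$ produces $f\chi^u \in \Gamma(X,\O(D_h))$ which generates $\O(D_h)$ over $\pdv(\D|_{U_P})$ because there $D_h = \divisor(f^{-1}\chi^{-u})$ — this is where the hypothesis that $-h|_\sigma(0)$ is semiample on the complete curve $Y$ is actually consumed. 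Your final paragraph (necessity of concavity via an affine piece $\langle u,\cdot\rangle - a_u$ of $l h_P$ with $a_u > (l h_P)^*(u)$ killing the required section, and sufficiency via sections from $\Box_h$) does match the paper's argument in spirit; but as a proof plan the proposal has a genuine gap, since its organizing principle — chart-by-chart semiampleness plus properness — can neither produce nor verify the degree/principality condition.
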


\begin{proof}
We first show that semiampleness follows from the above criteria. If $h$ is concave then so is $h_0$. This implies that the $u^h(\sigma)$ are exactly the vertices of $\Box_{h}$ and $h^*(u^h(\sigma))=h|_\sigma(0)$. The semiampleness for $h^*(u),\; u \in \Box_h \cap M$ now follows from the semiampleness at the vertices. Indeed, if $D, D'$ are semiample divisors on $Y$ then $D + \lambda (D'-D)$ with $0\leq \lambda \leq 1$ is also semiample. Observe that every vertex $(u,a_u)$ of the graph $\Graph_{h^*_P}$ corresponds to an affine piece of $h_P$. This again corresponds to a function  $f \chi^u$ with $\divisor(f) = a_u P$ on $U_P$ for some $\D \in \fan$ (see Section \ref{sec:construction-cartier-div}) with $P \in U_P \subset Y$ a sufficiently small neighborhood. Now $D_h|_{\pdv(\D|_{U_P})} = \divisor(f^{-1} \chi^{-u})$.
  
A point $(u,a_u)\in M \times \ZZ$ is a vertex of the graph $\Graph_{h^*}$ iff $(m u,m a_u)$ is a vertex of the graph $\Graph_{(m \cdot h)^*}$. Hence, after passing to a suitable multiple of $h$ we may assume that $h^*(u)$ is basepoint free with $f$ being a global section of $\CO(h^*(u))$. Then $f \chi^{u}$ is a global section of $\CO(D_h)$ generating $\CO(D_h)|_{\pdv(\D|_{U_P})}$.

For the other implication assume that there is a point $P \in Y$ such that $h_P$ is not concave. Then the same is true for all multiples $l h_P$. Thus, we can find an affine part $\langle u,\cdot \rangle -a_u$ of $l h_P$ such that $a_u > (l h_P)^*(u)$. But this implies that there is no global section $f\chi^u$ such that $\divisor(f) = a_u$ which contradicts the basepoint freeness of $D_{l h}$. Hence, $D_h$ cannot be semiample.
\end{proof}

\begin{cor}
A $T$-Cartier divisor $h \in \tcadiv(\fan)$ is ample iff all $h_P$ are strictly concave and for all tailcones $\sigma$ belonging to a polyhedral divisor $\D \in \fan$ with affine locus $\deg h|_{\sigma}(0) = \sum_P a^h_P(\sigma) < 0$, i.e.\ $-h|_{\sigma}(0)$ is ample.
\end{cor}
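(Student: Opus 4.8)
The plan is to deduce the corollary from the semiampleness criterion of Theorem~\ref{sec:thm-ample} by upgrading each of its two conditions to its strict counterpart. The guiding principle is that on a complete variety a semiample Cartier divisor is ample precisely when the morphism it defines, after passing to a basepoint-free multiple $mD_h$, contracts no curve. Since $D_h$ is $T$-invariant, the morphism $\phi = \phi_{|mD_h|}$ is equivariant, so any contracted curve may be taken $T$-invariant and it suffices to test positivity of $D_h$ on the invariant complete curves of $X$. First I would record that these curves split into two families: \emph{vertical} curves lying in a single fiber $\pi^{-1}(P)$ over a point $P \in Y$ (orbit closures inside the toric fiber), and \emph{horizontal} curves mapping finitely onto $Y$. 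The first family is controlled slicewise by $h_P$, the second by the base divisors $h^*(u^h(\sigma))$ of Proposition~\ref{prop:global_sections}.

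For the implication ``$\Leftarrow$'' I would argue as follows. Strict concavity of all $h_P$ implies concavity, and $\deg h|_\sigma(0) < 0$ is one of the disjuncts allowed by Theorem~\ref{sec:thm-ample}, so $D_h$ is semiample and $\phi$ is defined. On a fixed fiber the restriction of $\phi$ is governed by $h_P$ exactly as in the purely toric situation, where strict concavity of the support function is equivalent to ampleness; hence strict concavity of each $h_P$ forces the vertices $u^h(\sigma)$ of $\Box_h$ to be pairwise distinct and to separate adjacent maximal cones, so that no vertical curve is contracted. For the horizontal curves I would use that the base divisor attached to the weight $u^h(\sigma)$ is $h^*(u^h(\sigma)) = -h|_\sigma(0)$, so that $\deg h|_\sigma(0) < 0$ says precisely that $h^*(u^h(\sigma))$ has strictly positive degree, i.e.\ that $-h|_\sigma(0)$ is ample on the curve $Y$. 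By Proposition~\ref{prop:global_sections} the sections $f\chi^{u^h(\sigma)}$ then separate points and tangent directions along the horizontal curve over $Y$ inside the affine chart $X(\D_\sigma)$. Combining the two families, $\phi$ is quasi-finite and proper, hence finite, and $D_h$ is ample.

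For ``$\Rightarrow$'' I would assume $D_h$ ample, hence semiample, so that Theorem~\ref{sec:thm-ample} already yields concavity of the $h_P$ together with the disjunctive degree conditions, and then exclude the two degenerate cases. If some $h_P$ were concave but not strictly concave along a wall, the two maximal cones meeting there would receive a common slope $u^h(\sigma) = u^h(\sigma')$, and the invariant vertical curve dual to that wall in $\pi^{-1}(P)$ would be contracted by $\phi$, contradicting ampleness. If for some affine-locus $\sigma$ one had $\deg h|_\sigma(0) = 0$ (the ``semiample but not strictly negative'' disjunct), then $h^*(u^h(\sigma))$ would have degree zero and the corresponding horizontal curve over $Y$ in $X(\D_\sigma)$ would meet $D_h$ in degree zero, again contradicting ampleness. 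This is exactly why no strict condition is imposed on tailcones belonging only to divisors of complete locus: such a chart $X(\D)$ is affine and hence contains no complete curve to be isolated by the ampleness test, while by Proposition~\ref{sec:prop-complete-picard} the restriction $D_h|_{X(\D)}$ is principal and properness forces $\deg h|_\sigma(0) = 0$, so the strict inequality is neither available nor needed.

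The main obstacle I anticipate is the reduction to invariant curves together with the precise identification and intersection-number computation for the horizontal family: verifying that positivity of $D_h$ on a horizontal curve is captured \emph{exactly} by $\deg h^*(u^h(\sigma)) = -\deg h|_\sigma(0)$, with no residual boundary contribution from the points of $Y$ where $\loc\D$ ceases to be complete. Making this intersection computation rigorous, rather than heuristic, is the step where the affine-versus-complete locus dichotomy must be handled with care.
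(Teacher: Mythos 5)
Your route is genuinely different from the paper's. The paper deduces the corollary from Theorem~\ref{sec:thm-ample} by a purely formal perturbation argument, with no curves and no fibration: ampleness of $D_h$ is characterized by the requirement that $D_{h'} + kD_h$ be semiample for every invariant Cartier divisor $D_{h'}$ and $k \gg 0$, and this is mirrored combinatorially by the fact that $h_P$ is strictly concave iff $h'_P + kh_P$ is concave for every support function $h'$ and $k \gg 0$ (and likewise $\deg h|_\sigma(0) < 0$ iff the degree condition survives arbitrary perturbation); the corollary then drops out of the semiampleness criterion in two lines. You instead run a Nakai-type test on the semiample fibration: reduce to invariant curves, split them into vertical and horizontal families, and match strict concavity with positivity on vertical curves and $\deg h|_\sigma(0)<0$ with positivity on horizontal ones. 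This is viable --- it is essentially Timashev's viewpoint, consistent with Proposition~\ref{prop:intersection-numbers} and \cite{timashevCDcomplOne}, and it buys more: it identifies exactly which loci the semiample morphism contracts. But it requires machinery the paper never develops, namely the classification of invariant complete curves on $X(\fan)$ and the intersection formula $D_h \cdot C_\sigma = -\deg h|_\sigma(0)$ for the horizontal curve attached to a maximal tailcone $\sigma$; you correctly flag this as the hard step, and it is genuinely nontrivial (in particular walls with unbounded recession rays must be handled alongside compact walls).

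One step of your argument is actually wrong as stated, not merely unproven: you dismiss the complete-locus tailcones on the grounds that ``such a chart $X(\D)$ is affine and hence contains no complete curve.'' But \emph{every} chart $X(\D) = \spec A$ is affine, whatever the locus, and no complete invariant curve --- your vertical curves included --- lies in any single chart; if this reasoning were valid it would equally kill the vertical-curve test on which your whole argument rests. The correct reason that no condition is imposed for $\sigma$ belonging to a $\D$ with complete locus is structural: there $A_0 = k$, and the would-be horizontal family is collapsed by $r\colon \widetilde{X} \to X$ to the single attractive fixed point $V(A_{>0})$ appearing in the proof of Proposition~\ref{sec:prop-complete-picard}, so no horizontal curve with tailcone $\sigma$ exists in $X$; moreover Cartierness (Definition~\ref{def:casf}) already forces $h|_\D$ to be principal, whence $\deg h|_\sigma(0)=0$ automatically on a complete curve $Y$. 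With that repair, and a sentence justifying the reduction to invariant curves (the union of positive-dimensional fibers of $\phi$ is closed and $T$-invariant, and any positive-dimensional complete invariant subvariety contains an invariant curve, since $T$ either moves some point, giving a one-dimensional orbit closure, or acts trivially), your proof goes through; as written, the complete-locus case rests on a false dichotomy.
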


\begin{proof}
Note that for every invariant Cartier divisor $D_h$ the concaveness of $h$ implies that
$h_{\sigma}(0)$ is principal. Hence, the proof follows from Theorem \ref{sec:thm-ample} and the fact that $h_P$ is strictly concave if and only if for every support function $h'$ there is a $k \gg 0$ such that $h'+kh_p$ is concave.
\end{proof}

\begin{cor}
A $T$-Cartier divisor $h \in \tcadiv(\fan)$ is nef iff all $h_P$ are concave and $\deg h|_\sigma \leq 0$ for every maximal cone $\sigma \in \Sigma(n)$. 
\end{cor}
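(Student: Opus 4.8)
The plan is to deduce this from the ampleness criterion in the preceding corollary together with the fact that, for a complete (projective) variety, the nef cone is the closure of the ample cone. Concretely, I would fix once and for all a $T$-invariant \emph{ample} Cartier divisor $g \in \tcadiv(\fan)$; such a $g$ exists because the ampleness corollary lets us prescribe a strictly concave slice function $g_P$ on each $\fan_P$ with $\deg g|_\sigma(0) < 0$ on every affine-locus tailcone. By Kleiman's criterion, $h$ is nef if and only if $h + \epsilon g$ is ample for every $\epsilon \in \QQ_{>0}$, so the entire statement reduces to translating the ampleness conditions for $h + \epsilon g$ through the limit $\epsilon \to 0$.

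First I would record the two elementary linearity facts that make this limit transparent: the polyhedron $\Delta^\sigma_P$ is determined by $\fan$ alone, so on the strip over a maximal cone $\sigma$ the constant-part divisor is additive, $(h+\epsilon g)|_\sigma(0) = h|_\sigma(0) + \epsilon\, g|_\sigma(0)$, and on each slice $(h+\epsilon g)_P = h_P + \epsilon g_P$. For the forward implication, assume $h$ is nef, so $h+\epsilon g$ is ample for all $\epsilon>0$. Then each $h_P+\epsilon g_P$ is strictly concave; since $g_P$ is strictly concave, this is equivalent to $h_P$ being concave (the very fact quoted in the proof of the ampleness corollary). Likewise, for every affine-locus tailcone $\sigma$ we have $\deg h|_\sigma(0) + \epsilon\,\deg g|_\sigma(0) < 0$ for all $\epsilon>0$, and letting $\epsilon \to 0$ gives $\deg h|_\sigma(0) \leq 0$. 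For a maximal cone $\sigma$ arising as the tailcone of a complete-locus $\D$, the Cartier condition forces $h|_\D$ to be principal, hence $h|_\sigma(0)$ is a principal divisor on the complete curve $Y$ and $\deg h|_\sigma(0)=0$; thus the inequality holds automatically there. Together this yields concavity of all $h_P$ and $\deg h|_\sigma(0)\leq 0$ for every maximal cone $\sigma \in \Sigma(n)$.

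For the converse, suppose all $h_P$ are concave and $\deg h|_\sigma(0)\leq 0$ for every maximal cone. For any $\epsilon>0$ the sum $h_P+\epsilon g_P$ is strictly concave, and on each affine-locus tailcone $\deg(h+\epsilon g)|_\sigma(0)=\deg h|_\sigma(0)+\epsilon\,\deg g|_\sigma(0) < 0$, since $\deg h|_\sigma(0)\leq 0$ and $\deg g|_\sigma(0)<0$. By the ampleness corollary $h+\epsilon g$ is ample for all $\epsilon>0$, so $h$ is nef. The main obstacle I anticipate is justifying the passage ``nef $\iff$ limit of ample'' in this setting: Kleiman's theorem requires $X$ to be projective, whereas $\fan$ complete only guarantees that $X$ is complete and that a $T$-ample $g$ exists. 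I would therefore either add the standing hypothesis that $X$ is projective, or bypass Kleiman entirely by testing nefness directly on the invariant curves of $X$ --- the ``horizontal'' curves lying over points $P \in Y$, along which nonnegativity of the intersection product is exactly concavity of $h_P$, and the ``vertical'' curves associated to each maximal cone $\sigma$, along which the intersection number is a positive multiple of $-\deg h|_\sigma(0)$ --- using the intersection calculus developed below. It is worth noting that, unlike semiampleness (Theorem \ref{sec:thm-ample}), the nef criterion depends only on the \emph{degree} of $h|_\sigma(0)$ and not on whether $-h|_\sigma(0)$ is torsion, which is precisely what one expects from a numerical condition.
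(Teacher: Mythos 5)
Your overall strategy---characterizing nefness via ample perturbations $h+\epsilon g$ and translating the ampleness criterion through the linearity of $h \mapsto h_P$ and $h \mapsto h|_\sigma(0)$---is exactly the idea behind the paper's proof, and your bookkeeping (additivity of the constant parts, concavity surviving the limit $\epsilon \to 0$, the degree being automatically zero on tailcones of complete-locus divisors by principality) is correct. The genuine gap is at the very first step: you claim a $T$-invariant ample $g \in \tcadiv(\fan)$ exists ``because the ampleness corollary lets us prescribe a strictly concave slice function $g_P$ on each $\fan_P$.'' This existence claim is false in general. Any such $g$ would in particular have a strictly concave common linear part $g_t$ on the tailfan $\Sigma$, and already in the toric case (trivial slices, $\Sigma$ a complete fan in dimension $\geq 3$) there are complete non-projective fans admitting no strictly concave support function; likewise the slice subdivisions $\fan_P$ need not be coherent. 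Indeed, by the ampleness corollary the existence of such a $g$ is \emph{equivalent} to projectivity of $X(\fan)$, which is strictly stronger than the standing hypothesis that $\fan$ is complete. You noticed this obstacle yourself, but neither fallback closes it: option (a) weakens the statement to projective $X$, and option (b)---testing nefness directly on invariant curves---invokes an intersection calculus for invariant curves that the paper does not develop (Proposition~\ref{prop:intersection-numbers} computes top self-intersection numbers of \emph{semiample} divisors, not $D_h \cdot C$ for individual invariant curves), and moreover silently assumes that nefness can be tested on invariant curves alone, itself a claim needing proof.

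The paper closes precisely this gap with the equivariant Chow Lemma: there is an equivariant proper birational morphism $\phi: X(\fan') \to X(\fan)$ with $X(\fan')$ projective. Since $\phi$ is proper, surjective and birational, $D_h$ is nef if and only if $\phi^* D_h$ is, and the combinatorial conditions on $h$ are stable under passing to the refinement $\fan'$; one may therefore assume $X$ projective, where an invariant ample $D_{h'}$ exists, and then run exactly your perturbation argument with $g$ replaced by $h'$. With that reduction inserted at the start, your proof coincides with the paper's; without it, your argument only establishes the corollary under an unstated projectivity hypothesis.
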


\begin{proof}
Using the equivariant Chow Lemma, we can pull back $D_h$ by an equivariant birational proper morphism $\phi: X(\fan') \to X(\fan)$. Hence, we may assume that $D_h$ lives on a projective $T$-variety $X':=X(\fan')$, i.e.\ there exists an ample divisor $D_{h'}$ on $X'$. It is easy to check that $D_h + \varepsilon D_{h'}$ is ample for $\varepsilon >0$ iff $h$ fulfills the above conditions. 
\end{proof}

Applying Proposition~\ref{prop:global_sections} to determine $\dim \Gamma(X,D_h)$, we are now able to compute intersection numbers. 

\begin{defn}
For a function $h^*:\Box\rightarrow \wdiv_\Q Y$ we define its {\em volume} to be
\[\vol h^* := \sum_P \int_{\Box} h^*_P \vol_{M_\RR}\,.\]
We associate a {\em mixed volume} to functions $h^*_1,\ldots, h^*_k$ by setting
\[V(h^*_1,\ldots, h^*_k):= \sum_{i=1}^k (-1)^{i-1} \sum_{1\leq j_1 \leq \ldots j_i \leq  k} \vol(h^*_{j_1} + \cdots + h^*_{j_i})\,.\]
\end{defn}

The following proposition is a special instance of \cite[Theorem 8]{timashevCDcomplOne}.

\begin{prop}\label{prop:intersection-numbers}
Let $\fan$ be a divisorial fan on a curve $Y$ with slices in $N \cong \ZZ^n$.
\begin{enumerate}
\item The self-intersection number of a semiample Cartier divisor $D_h$ is given by \label{item:prop-vol-vol}
\[(D_h)^{(n+1)} = (n+1)!\vol h^*.\]
\item Assume that $h_1, \ldots, h_{n+1}$ define semiample divisors $D_i$ on $X(\fan)$. Then 
\[(D_1 \cdots D_{n+1}) = (n+1)!V(h^*_1, \ldots, h^*_{n+1}).\]
\label{item:prop-vol-mixed}
\end{enumerate}
\end{prop}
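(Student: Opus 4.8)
\emph{Overview.} The plan is to prove (1) by comparing two expressions for the top self-intersection number---one coming from asymptotic Riemann--Roch, the other from the lattice-point description of global sections in Proposition~\ref{prop:global_sections}---and then to deduce (2) from (1) by polarization.

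\emph{Part (1).} For a semiample (in particular nef) divisor $D$ on a complete variety of dimension $n+1$ one has
\[(D^{(n+1)}) = (n+1)!\,\lim_{m\to\infty}\frac{\dim\Gamma(X,\CO(mD))}{m^{n+1}}\,;\]
here the Euler characteristic $\chi(X,\CO(mD))$ is a degree-$(n+1)$ polynomial in $m$ with leading coefficient $(D^{(n+1)})/(n+1)!$, and nefness forces the higher cohomology to grow like $o(m^{n+1})$, so that $\dim\Gamma=h^0$ has the same leading term; as in the preceding corollary one reduces to the projective case by the equivariant Chow lemma. By Proposition~\ref{prop:global_sections}, together with $\Box_{mh}=m\Box_h$ and $(mh)^*(mw)=m\,h^*(w)$,
\[\dim\Gamma(X,\CO(mD_h))=\sum_{u\in m\Box_h\cap M}\dim\Gamma\bigl(Y,\CO(\lfloor (mh)^*(u)\rfloor)\bigr).\]
By Riemann--Roch on the curve $Y$ of genus $g$, every summand with $\deg\lfloor (mh)^*(u)\rfloor>2g-2$ equals $\deg\lfloor (mh)^*(u)\rfloor+1-g$, whose leading part is $\deg (mh)^*(u)=\sum_P\minvert(u-m h_P)=m\sum_P h^*_P(u/m)$. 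The outer sum is then a Riemann sum over the lattice points of $m\Box_h$; dividing by $m^{n+1}$ and letting $m\to\infty$ gives $\sum_P\int_{\Box_h}h^*_P\,\vol_{M_\RR}=\vol h^*$, which is (1).

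\emph{Error estimates (the main obstacle).} The technical heart is to show that the discrepancies in the above asymptotic are of order $m^n$ and hence disappear after dividing by $m^{n+1}$: the difference between $\deg\lfloor (mh)^*(u)\rfloor$ and $\deg (mh)^*(u)$ is bounded because only finitely many points $P$ carry coefficients differing from $h_t$; the constant contributions $1-g$ are summed over the $O(m^n)$ lattice points of $m\Box_h$; the weights near $\partial(m\Box_h)$, where $\deg (mh)^*(u)\le 2g-2$ or is negative (so that $\dim\Gamma$ must be read off from $0\le\dim\Gamma\le\deg+1$ instead of from exact Riemann--Roch), again form a set of size $O(m^n)$; and the higher cohomology is $o(m^{n+1})$. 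Concavity of the $h_P$---equivalently, semiampleness of $D_h$ by Theorem~\ref{sec:thm-ample}---ensures that each $h^*_P$ is concave and piecewise affine on $\Box_h$, so the integrals are finite and the Riemann sums converge.

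\emph{Part (2).} This is formal polarization. Since a sum of semiample divisors is semiample, for $\lambda_1,\dots,\lambda_{n+1}\ge0$ the class $\sum_i\lambda_i D_{h_i}$ is semiample and (1) gives
\[\Bigl(\bigl(\textstyle\sum_i\lambda_i D_{h_i}\bigr)^{(n+1)}\Bigr)=(n+1)!\,\vol\bigl((\textstyle\sum_i\lambda_i h_i)^*\bigr).\]
Because $\Box_{h+h'}=\Box_h+\Box_{h'}$ as a Minkowski sum of weight polytopes and the dual functions combine accordingly, the right-hand side is a homogeneous polynomial of degree $n+1$ in the $\lambda_i$; expanding the left-hand side by the symmetry and multilinearity of the intersection form, the coefficient of the monomial $\lambda_1\cdots\lambda_{n+1}$ records $(D_1\cdots D_{n+1})$. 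The alternating inclusion--exclusion sum defining $V(h^*_1,\dots,h^*_{n+1})$ is exactly the polarization identity that extracts this fully mixed coefficient from the diagonal values $\vol\bigl((\sum_{j\in S}h_j)^*\bigr)$, so comparing the two expansions yields $(D_1\cdots D_{n+1})=(n+1)!\,V(h^*_1,\dots,h^*_{n+1})$.
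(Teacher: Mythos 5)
Your proposal is correct and follows essentially the same route as the paper's proof: part (1) via the asymptotic Riemann--Roch limit $(D_h)^{(n+1)}=(n+1)!\lim_\nu h^0(X,\CO(\nu D_h))/\nu^{n+1}$ (with the equivariant Chow lemma reducing to the projective case where higher cohomology is asymptotically negligible), then Proposition~\ref{prop:global_sections} plus Riemann--Roch on $Y$ to turn $h^0$ into a Riemann sum over $\nu\Box_h\cap M$ converging to $\vol h^*$, and part (2) by polarization using multilinearity and symmetry of intersection numbers. Your error estimates (bounded floor discrepancies since $h^*(u)$ is supported on finitely many points, and $O(m^n)$ lattice points in the $n$-dimensional polytope $m\Box_h$ absorbing all bounded summands after division by $m^{n+1}$) correspond exactly to the paper's sandwich bound between $\deg\lfloor\nu h^*(\frac{1}{\nu}u)\rfloor-g+1$ and $\deg\lfloor\nu h^*(\frac{1}{\nu}u)\rfloor+1$.
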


\begin{proof}
If we apply (\ref{item:prop-vol-vol}) to every sum of divisors from $D_1,\ldots,D_{m+1}$ we get (\ref{item:prop-vol-mixed}) by the multilinearity and the symmetry of intersection numbers. To prove (\ref{item:prop-vol-vol}) we first recall that
\[(D_h)^{m+1} = \lim_{\nu \rightarrow \infty} \frac{(m+1)!}{\nu^{m+1}} \chi(X,\CO(\nu D_h)).\]
Invoking the equivariant Chow Lemma, we can assume $X:=\pdv(\fan)$ to be projective. So the higher cohomology groups are asymptotically irrelevant \cite[Thm. 6.7.]{MR1919457}. Hence,
\[(D_h)^{m+1} = \lim_{\nu \rightarrow \infty} \frac{(m+1)!}{\nu^{m+1}} h^0(X,\CO(\nu D_h)).\]
Note that $(\nu h)^*(u)=\nu \cdot h^*(\frac{1}{\nu}u)$. We can now bound $h^0$ by

\begin{equation} \label{eq:h-bound}\tag{a}
\begin{split}
\sum_{u \in \nu \Box_h \cap M} \left( \deg \lfloor \nu h^*\left({\textstyle \frac{1}{\nu}} u \right)\rfloor - g(Y) +1\right) & \leq  h^0(\CO(\nu D_h)) \\   & \leq  \sum_{u \in \nu \Box_h \cap M} \deg \lfloor \nu h^*\left({\textstyle \frac{1}{\nu}}  u\right) \rfloor + 1 .
\end{split}
\end{equation}

Furthermore, we have that

\begin{eqnarray*}
\lim_{\nu \rightarrow \infty} \frac{(m+1)!}{\nu^{m+1}} \sum_{u \in \nu \Box_h \cap M} \deg \lfloor \nu h^*\left({\textstyle \frac{1}{\nu}} u\right) \rfloor
& = & \lim_{\nu \rightarrow \infty} \frac{(m+1)!}{\nu^{m}} \sum_{u \in \Box_h \cap \frac{1}{\nu}M} \frac{1}{\nu} \deg \lfloor \nu h^*(u)\rfloor \\
&=& (m+1)! \int_{\Box_h} h^* \vol_{M_\RR}.
\end{eqnarray*}

Finally,
\[\lim_{\nu\rightarrow \infty} \frac{1}{\nu^{m+1}} \sum_{u \in \nu \Box_h \cap M} (g - 1) = (g-1)\lim_{\nu\rightarrow \infty} \frac{\# (\nu\cdot \Box_h \cap M)}{\nu^{m+1}}=0\,.\]
Passing to the limit in (\ref{eq:h-bound}), the term in the middle converges to $\vol h^*$. This completes the proof.
\end{proof}

\begin{ex}
Let $X = \PP_{\Omega_{\PP^2}}$ as in Example \ref{ex:cotan} and $D_h = -K_X$. An easy calculation shows that $(-K_X)^3 = 6 \cdot (18\frac{2}{3} -5\frac{1}{3} -5\frac{1}{3}) = 48$, matching the result already known from the classification of Fano threefolds.
\end{ex}

\section{Comparing results in the case of affine $\C^*$-surfaces}

Normal affine $\C^*$-surfaces are very well understood. Results concerning the divisor class group and the canonical divisor can be found in \cite[4.24, 4.25]{MR2020670} and references therein. We will shortly remind the reader of the notation used in \cite{MR2020670} where the Dolgachev-Pinkham-Demazure (DPD) construction is used for the explicit construction of (hyperbolic) affine $\C^*$-\,surfaces, and state the corresponding results.

\subsection{Elliptic Case}
Let $C$ be a smooth projective curve and $D = \sum_i \frac{e_i}{m_i} [a_i]$ a $\Q$-Cartier divisor with $\sum_i \frac{e_i}{m_i} > 0$ and $\gcd(e_i,m_i) = 1$. The cone construction provides an affine $\C^*$-surface $X = \spec A_{C,D}$ whose class group of divisors is given by
\[ \cl(X) = \frac{\pic C \oplus \bigoplus_i \Z [O_i]}{\langle \pi^*(a_i) = m_i [O_i], \, 0 = \sum_i e_i[O_i] \rangle}\,,\]
and the canonical divisor can be represented by
\[K_X = \pi^*K_C = \sum_i(m_i -1)[O_i]. \]
Here, $O_i = \pi^{-1}(a_i)$. The corresponding data in our language are $Y=C$, $\sigma = \Q_{\geq 0}$, and $\D = \sum_i [\frac{e_i}{m_i}, \infty) \otimes {a_i}$. Then $D = \D(1)$.

\subsection{Parabolic Case}
This time one considers a smooth affine curve $C$ together with a $\Q$-divisor $D = \sum_i \frac{e_i}{m_i}[a_i]$ with $\gcd(e_i,m_i) = 1$. The DPD construction yields an affine surface $X = \spec A_{C,D}$ whose class group of divisors has the following form 
\[\cl(X) = \frac{\pic C \oplus \Z[C] \oplus \bigoplus_{i=1}^{k} \Z[O_i]}{\langle \pi^*(a_i) = m_i[O_i], [C] = -\sum_{i=1}^k e_i[O_i]\rangle}\,.\] 
In addition one has that
\[K_X = \pi^* K_C + \sum_{i=1}^k (m_i-1)[O_i] - [C]\,.\]
Again, $O_i = \pi^{-1}(a_i)$. Using our notation gives us $Y=C$, $\sigma = \Q_{\geq 0}$, and $\D = \sum_i [\frac{e_i}{m_i}, \infty) \otimes {a_i}$. Once more $D = \D(1)$.

\subsection{Hyperbolic Case}
Let us consider a smooth affine curve $C$ and a pair $(D_+,D_-)$ 
\[D_+ = -\sum_i \frac{e_i}{m_i}a_i - \sum_j \frac{e_j^+}{m_j^+}b_j\,, \quad D_- = \sum_i \frac{e_i}{m_i}a_i + \sum_j \frac{e_j^-}{m_j^-}b_j \; \]
of $\Q$-divisors on $C$ such that $D_+ + D_- \leq 0$. Recall the convention that 
\[D_+(a_i) + D_-(a_i) = 0\,,\quad \textnormal{and} \quad D_+(b_j) + D_-(b_j) < 0\,.\]
Using this pair, the DPD construction provides us with an affine $\C^*$-surface
\[X = \spec A_{C,(D_+,D_-)}\,.\]
The class group of divisors $\cl(X)$ then is 
\[ \pic C  \oplus \sum_{i=1}^k \Z[O_i] \oplus \sum_{j=1}^l (\Z[\overline{O}_j^+] \oplus \Z[\overline{O}_j^-])\]
modulo the relations 
\[\begin{array}{rcl} \pi^*(a_i) & = & m_i[O_i] \\ \pi^*(b_j) & = & m_j^+[\overline{O}_j^+]-m_j^-[\overline{O}_j^-] \\ 0 & = & \sum_{i=1}^k e_i[O_i] + \sum_{j=1}^l (e_j^+[\overline{O}_j^+]-e_j^-[\overline{O}_j^-])\,. \end{array}\]
Furthermore,
\[K_X = \pi^* K_C + \sum_{i=1}^k (m_i-1) [O_i] + \sum_{j=1}^l \big((m_j^+ -1)[\overline{O}_j^+] + (-m_j^- -1)[\overline{O}_j^-] \big)\,. \]
As before, $O_i = \pi^{-1}(a_i)$, whereas $\pi^{-1}(b_j) = \overline{O}_j^+ \cup \overline{O}_j^-$. In our terms: $Y=C$, $\sigma = \{0\}$ and $\pD = \sum_i [v_i^-, v_i^+]\otimes{y_i}$ with $D_+ = \pD(1)$, and $D_- = \pD(-1)$.\\[2mm]

Using our formulae for the divisor class group and the canonical divisor yields the same results in all three cases, as can readily be seen by invoking Corollary \ref{cor:divclass} and Theorem \ref{sec:thm-canonical-divisor}. Hence, treating each case separately is no longer necessary.

\bibliographystyle{alpha}
\bibliography{torusinvariant.bib}

\end{document}